\numberwithin{equation}{section}
\theoremstyle{plain}
\newtheorem{theorem}{Theorem}[section]
\newtheorem{lemma}[theorem]{Lemma}
\theoremstyle{remark}
\newtheorem{proposition}[theorem]{Proposition}
\newtheorem{corollary}[theorem]{Corollary}
\newtheorem*{remark}{Remark}
\renewcommand\P{{\mathbb P}}
\newcommand\R{{\mathbb R}}
\newcommand{\crit}{{\rm Crt}}
\newcommand{\I}{{\rm Id}}
\newcommand{\J}{{\rm J}}
\renewcommand{\u}{{\rm u}}
\def\cT{\mathcal{T}}
\newcommand\E{{\mathbb E}}
\newcommand{\Var}{{\rm Var}}
\newcommand{\Cov}{{\rm Cov}}
\newcommand\X{X(\cdot)}
\newcommand\cC{\mathcal{C}}
\newcommand{\balpha}{\boldsymbol \alpha}
\newcommand\ve{\varepsilon}
\newcommand\cA{\mathcal{A}}
\newcommand{\indicator}{\mathbf{1}}
\newcommand{\rX}{\boldsymbol X}
\newcommand{\rY}{\boldsymbol Y}
\newcommand\N{{\mathbb N}}
\newcommand\cH{\mathcal{H}}
\newcommand\C{{\mathbb C}}
\newcommand{\hB}{{\widehat B}}
\newcommand{\<}{\big\langle}
\renewcommand{\>}{\big\rangle}
\newcommand\diag{\mbox{diag}}
\newcommand\ct{\mathtt{(cs)}}
\newcommand{\bY}{{ \mathbf Y}}
\newcommand\Z{{\mathbb Z}}
\newcommand\dd{{\mathrm d}}
\begin{document}

\begin{frontmatter}

\title{Multivariable CLT for critical points}

\author[1]{Jean-Marc Aza\"{\i}s}
\ead{jean-marc.azais@math.univ-toulouse.fr}

\author[2]{Federico Dalmao}
\ead{fdalmao@unorte.edu.uy}

\author[3]{C\'eline Delmas}
\ead{celine.delmas.toulouse@inrae.fr}

\address[1]{IMT, Universit\'e de Toulouse, France.}
\address[2]{DMEL, CENUR Litoral Norte, Universidad de la Rep\'{u}blica, Uruguay}
\address[3]{MIAT, INRAE, Universit\'e de Toulouse, France.}

\begin{abstract}
We prove a multivariate central limit theorem for the numbers of critical points with all possible indexes which lie above a level of a non-necessarily isotropic Gaussian random field.
We prove the non-degeneracy of the limit joint distribution in the isotropic case.
We also consider the degenerate case when the value is not restricted to lie above a level.
We extend, to the non-isotropic framework, known results by Estrade \& Le\'on and Nicolaescu for the Euler characteristic of an excursion set and for the total number of critical points of Gaussian random fields.
Though we use the classical tools of chaotic expansions and fourth moment theorem, our proof of the non-degeneracy of the limit distribution does not focus on the explicit description of the lowest order chaotic components but it transforms some convenient Hermite coefficients of arbitrary order into functions of the eigenvalues of a Gaussian Orthogonal Ensemble (GOE) random matrix and use the classical Laplace Method to conclude.
\end{abstract}

\begin{keyword}
Stationary Gaussian fields \sep Critical points \sep Central limit theorem \sep Fourth moment theorem \sep Almost sure convergence \sep Euler Characteristic \sep Gaussian Orthogonal Ensemble (GOE) \sep Laplace Method
\MSC[2020] 60G15 \sep 60G10
\end{keyword}

\end{frontmatter}
\newpage
\tableofcontents

\section{Introduction}
The behavior of critical points of random fields has been widely studied from different points of view.
The first seminal works date back to the papers by Nosko \cite{nosko1, nosko2}, Lindgren \cite{lindgren}, Belyaev and Piterbarg \cite{pit1, pit2}, Hasofer \cite{hasofer}, the books by Adler \cite{adler} and by Piterbarg \cite{pit3}.
They mainly focus on local maxima of Gaussian random fields, their positions and their mean number.
In particular, a one-term approximation for the mean number of local maxima above a high level is given by Hasofer \cite{hasofer} and Adler \cite{adler} for a stationary Gaussian field.
A more accurate asymptotic expansion is then obtained by Delmas \cite{delmas1998} and Aza\"{\i}s and Delmas \cite{AD2002}, who also give a good approximation for the distribution of the maximum of a zero-mean stationary Gaussian field. It is worth mentioning that these studies are related to those concerning the Euler characteristic \cite{adler, adlertaylor, taylortakemuraadler, marioyluigi}. The main applications of such results concern the detection of peaks in a random field. For example, we can mention papers on the detection of activation zones in the human brain \cite{worsley, worsley2, taylorworsley}, on applications in astronomy \cite{torres, vogeley}, on the detection of genes on a chromosome \cite{azaiscierco, azaisdelmasrabier, rabierdelmas}. Local maxima of random fields are also used to define a crest in the modeling of random sea waves, see \cite{azais2005}, \cite[Ch.11]{marioyluigi} and references therein.

More recent works study the spread of the critical points of a Gaussian random field.
In particular, it is interesting to know if there is attraction or repulsion among them.
The answer can depend on the indexes, i.e., on the number of negative eigenvalues of the Hessian, of the considered critical points.
The seminal work by Beliaev, Cammarota and Wigman \cite{BCW} considers the particular case of the \emph{Random Plane Wave}, a popular Gaussian field defined on $\R^2$.
These results have been extended to more general planar random fields in \cite{BCW2} and \cite{LLR} and to any dimension in \cite{AD2022}.
Another direction is to study the number of critical points above some level and with given indexes.
This is the object of the papers by Auffinger, Ben Arous and Čern\'{y} \cite{ABAC} and by Auffinger and Ben Arous \cite{ABA}, where an exponential behavior of the number of critical points is computed as a function of the index and of the level.
These results have direct applications to the minimization of likelihood functions in large dimension since there is a critical region (for the level) in which only minima can be found (at the logarithmic scale).
The last direction is to establish central limit theorems (CLT) on a family of growing sets.
The first paper on this is by Estrade and Le\'on \cite{EL} and considers the Euler characteristic of the excursion set above the level $u$ of a real-valued isotropic Gaussian random field over a set $\cT\subset\R^d$.
This quantity is approximated by an alternate sum of the numbers $\crit^k (u,\cT)$ of critical points with index $k$ above $u$ within $\cT$.
More precisely, the modified Euler characteristic is defined by 
\begin{equation*} 
	\Phi_u (\cT) := \sum _{k=0}^d  (-1)^{d-k}\crit^k (u,\cT).
\end{equation*}
The paper uses the Hermite representation of the number of critical points and the fourth moment theorem \cite{Nou:Pec:Pod,Nou:Pec}.
In another paper, Nicolaescu \cite{nico} studies the total number of critical points
\[
   \sum _{k=0}^d  \crit^k (-\infty,\cT)
\]
and establishes a central limit theorem using analogous methods.
This result has been extended to the study of $ \crit^k (-\infty,\cT)$ by \cite{chevalier}.
\medskip

In this paper we aim at obtaining a deeper understanding of the (joint) distribution of the different kinds of critical points of a centered stationary Gaussian field defined on $\R^d$.
More precisely, we consider the numbers of critical points within a set $\cT$ with all possible indexes $k=0,1,\cdots,d$ whose values lie above a level $u\in\R\cup\{-\infty\}$, that is to say that we consider the distribution of the random vector
\[
 \Big( \crit^0 (u,\cT),\cdots, \crit^d (u,\cT) \Big).
\]
Our contribution is threefold: 
(i.) We establish the multivariate CLT as $\cT$ grows to $\R^d$ for the vector above with a finite limit variance matrix.
Here we resort on chaotic expansions and on the celebrated Fourth Moment Theorem.\\
(ii.) We remove the usual isotropy assumptions on the field.
Actually, a careful reading of the literature (see e.g.: \cite{EL} or \cite{nico}) shows that isotropy is used only to state the finiteness and continuity w.r.t.
the level of the second moment of the number of critical points or of some related quantity.
At this point we are inspired by \cite{AL,EF}, see Proposition \ref{p:prop} below.
\\
(iii.) We discuss the non-degeneracy of the limit joint distribution.
This is the most novel part of the paper since we lack systematic methods, see the related discussion in \cite{Gass-esp}.
Even if we resort on chaotic expansions to get the non-degeneracy of the limit distribution for $u\neq-\infty$ in the isotropic case, since the dimension is arbitrary, we are led to deal virtually with all the chaotic components and not only with the first ones as is the usual case in the literature.
As expected, at some point we have to delve into Hermite coefficients.
We are able to discard the major part of these coefficients and to focus on a particular class which we can manipulate so that we avoid their explicit computation by transforming the problem into the study of the linear independence of some functions of the eigenvalues of the Gaussian  Orthogonal Ensemble (GOE) matrices.
We use the classical Laplace method to conclude.
\smallskip
     
The organisation of the paper is as follows.
We present the general framework and the main theorem in Section \ref{s:gmt}.
In Section \ref{s:H}, we introduce chaotic expansions, one of our main tools.
Section \ref{s:proof} is dedicated to the proof of the main result.
Finally,  \ref{appA}, \ref{appB}, \ref{s:app:geman}, \ref{a:pqcero} and \ref{a:somel} present auxiliary material and some postponed proofs.
{\bf Notation:} 
N. D. means \emph{non-degenerate}: for a matrix it means no zero eigenvalue and for a multivariate Gaussian distribution, it means no zero eigenvalue for its variance-covariance matrix;
$\ct$ stands for an unimportant positive constant whose value may change from line to line;
$\phi_k$ is the standard normal density in $\R^k$; $\mathbb{S}^{k-1}$ is the unit sphere in $\R^k$; $[q]=\{1,\cdots,q\}$;
$\simeq$ is the equivalence of functions; $\cal F$ is the Fourier transform;
$\Var$ is the variance-covariance matrix of a random vector; $\Cov$ is the covariance matrix between two random vectors;
we choose some norms that are denoted $\|\cdot\|$ for the spaces of matrices and order $3$ and $4$ tensors;
$e^{\mathcal D}_k:k=0,1,\cdots,\mathcal D$ denote the canonical vectors in $\R^{\mathcal D}$.
A  list of  main notation is presented in  \ref{app:notation}.
\section{Generalities and main result} \label{s:gmt}
Let $\X$ be a real valued random field defined on $\R^d$,  $d\geq 1$.
As soon as they exist, write $X'(\cdot)$ for the gradient of $X(\cdot)$ and $X''(\cdot)$ for its Hessian matrix.
Consider the following assumptions:
\begin{enumerate}
\item[] (A1) : The random field $\X$ is centered, stationary and Gaussian with $C^2$ paths.
Assume that $\Var (X'(t))$ is N.D.
\end{enumerate}
Denote $r(\cdot)$ for the covariance function of $\X$, that is, $r(t)=\E\big(X(0)X(t)\big)$, $t\in \R^d$.
\begin{enumerate}
\item[] (A2) : {\bf Geman's condition.}
For sufficiently small Borel set $B \subset \R^d $, $\E\big((N(v,B))^2\big)$ is continuous as a function of $v$ at zero, where 
\begin{equation} \label{a:1}
N(v,B):= \# \{ t \in B : X'(t) = v\}, \hspace{0.5cm} v,t\in \R^d.
\end{equation}

\item[] (A3) : {\bf Arcones' condition.}
Defining  
\begin{equation} \label{e:psi}
  \Psi(t) := \max\left\{\Big|\frac{\partial^j r(t)}{\partial t_m}\Big|
: m\in [d]^j, 0\leq j\leq 4\right\},
\end{equation}
it holds that $\Psi(t)\to0$, as $\|t\|\to\infty$, and $\Psi\in L^1(\R^d)$.
\item[] (A4) :  {\bf Isotropy and spectral condition.} The random field $\X$ is isotropic and admits a spectral density which is positive in a neighborhood of $0$.
\end{enumerate}
\medskip

\begin{remark}
Assumption {\rm (A1)} establishes minimal regularity and non-degeneracy conditions for the underlying random field.
Compared to previous works \citep{EL,nico,AL}, it is worth pointing out that our work does not ask for isotropy, except for setting the non-degeneracy of the limit distribution.
Besides, note that {\rm (A1)} implies by \citep[Prop.2.1.]{aal} that the sample paths of $\X$ are almost surely Morse: 
\[
 \P\big\{ \exists t \in \R^d : X'(t)=0,\  \det (X''(t) ) =0\big\} =0.
\] 
Assumption {\rm (A1)} is met, for instance, if the covariance is $C^4$ with some logarithmic regularity of its fourth derivative at zero.
Assumption {\rm (A2)} is natural according to previous works \cite{geman,K:L-geman,AL}.
Proposition \ref{p:prop} below gives sufficient conditions for {\rm (A2)} to hold.
Assumption {\rm (A3)} provides a simple way to unify the necessary integrability of the covariance function and its derivatives regarding the proof of the asymptotic normality.
Assumption {\rm (A4)} simplifies the computations involved in the lower bounds for the limit variance.
Note that Assumption {\rm (A3)} implies that the spectral density exists so that the second part of Assumption {\rm (A4)} can be reduced to the positiveness of the spectral density around $0$.
Finally, it is worth mentioning that these four assumptions are met for classical families of Gaussian fields as: Bargmann-fock, Mat\'ern (with $\nu>2$), random wave model.
\end{remark}
\medskip

We define the {\bf index}  $i(X''(t))$ of a symmetric matrix $M$ as the number of its negative eigenvalues.
We consider the set $\cT =[-T,T]^d$. For $u \in \R \cup \{-\infty\}$, and $k=0,1,\cdots,d$, we define {\bf the number of critical points of $\X$ within $\cT$ with value higher than $u$ and index $k$} by
\begin{equation*}
\crit^k (u,\cT):= \#\big\{ t \in \cT : X'(t)=0, i(X''(t)) =k, X(t)>u\big\}.
\end{equation*}
In order to study the joint distribution of these random variables, we define for $\balpha=(\alpha_0,\alpha_1,\cdots,\alpha_d)\in\R^{d+1}$:
\begin{align} \label{e:Bc}
 \crit^{\balpha} (u,\cT) &:= \sum^d_{k=0} \alpha_k \crit^k (u,\cT);
\notag \\
 \cC^{\balpha}(u,\cT) &:=\frac{ \crit^{\balpha}(u,\cT)-\E( \crit^{\balpha}(u,\cT))}{(2T)^{d/2}}.
\end{align}

\noindent The following theorem is the main result of the paper.
\begin{theorem} \label{t:tcl}
Consider a real-valued random field $\X$ defined on $\R^d$ satisfying Assumptions {\rm(A1)}, {\rm(A2)} and {\rm(A3)}.
Let $u\in\R \cup \{ -\infty\}$, $\balpha\in\R^{d+1}$ and $\cT =[-T,T]^d$ for $T>0$.
Then,
\begin{enumerate}
\item There exists $V^{\balpha}(u)<\infty$ such that
\begin{equation*}
 \lim_{T\to\infty} \Var \big( \cC^{\balpha}(u,\cT) \big) = V^{\balpha}(u).
\end{equation*}

\item The distribution of $\cC^{\balpha}(u,\cT)$ converges, as $T\to\infty$, towards the centered normal distribution with variance $V^{\balpha}(u)$.
\item For $u=-\infty$, setting $\balpha= \sum^d_{k=0}(-1)^k e^d_k$ we have $V^{\balpha}(-\infty)=0$.
\item If in addition we assume {\rm(A4)}, for $u\in\R$ and $\balpha \neq 0$, we have $V^{\balpha}(u)>0$.
\end{enumerate}
\end{theorem}

\begin{remark}
When $u\in\R$, setting $\balpha= \sum^d_{k=0}(-1)^k e^d_k$, we obtain the results by Estrade \& Le\'on \cite{EL} even under more general conditions.
When $u =-\infty$, setting $\balpha= \sum^d_{k=0}e^d_k$, we obtain the result by Nicolaescu \cite{nico}.
\end{remark} 

The following corollary is an immediate consequence of Theorem \ref{t:tcl}.
\begin{corollary}
Under the conditions {\rm(A1)}, {\rm(A2)} and {\rm(A3)}, the random vector $ \left( \cC^0(u,\cT) , \cdots, \cC^d (u,\cT)\right)$ converges, as $T\to \infty$ to a multivariate Gaussian distribution.
When $u= -\infty$, the limit distribution degenerates. When $u\in\R$ and (A4) holds, the limit distribution does not degenerate.
\end{corollary}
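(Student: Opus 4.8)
The plan is to obtain the corollary from Theorem~\ref{t:tcl} via the Cramér--Wold device, so that no new estimate is needed: the theorem already contains the one-dimensional CLT for \emph{every} linear combination, and the (non-)degeneracy is just a reformulation of parts (2) and (4) in terms of the limiting covariance matrix.

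First I would record the elementary identity underlying everything: by the definition \eqref{e:Bc}, $\crit_u^{\balpha}(\cT)=\sum_{k=0}^d\alpha_k\crit_u^k(\cT)$, and since expectation and the normalisation by $\sqrt{T^d}$ are linear,
\[
\cC_u^{\balpha}(\cT)=\sum_{k=0}^d\alpha_k\,\cC_u^k(\cT)
=\big\langle\balpha,\big(\cC_u^0(\cT),\dots,\cC_u^d(\cT)\big)\big\rangle ,
\qquad \balpha\in\R^{d+1}.
\]
Hence Theorem~\ref{t:tcl}(3) says precisely that every one-dimensional projection of the vector $\big(\cC_u^0(\cT),\dots,\cC_u^d(\cT)\big)$ converges, as $T\to\infty$, to a centered Gaussian with variance $V_{\balpha}(u)$.

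Next I would check that $\balpha\mapsto V_{\balpha}(u)$ is a non-negative quadratic form, so that it is represented by a matrix. By Theorem~\ref{t:tcl}(1) the limit $V_{\balpha}(u)=\lim_{T\to\infty}\Var\big(\cC_u^{\balpha}(\cT)\big)$ exists and is finite; for each fixed $T$ the map $\balpha\mapsto\Var\big(\cC_u^{\balpha}(\cT)\big)=\balpha^{\top}\Var\big(\cC_u^0(\cT),\dots,\cC_u^d(\cT)\big)\balpha$ is a non-negative quadratic form, and a pointwise limit of such forms is again one. Therefore there is a symmetric positive semidefinite matrix $\Sigma(u)\in\R^{(d+1)\times(d+1)}$ with $V_{\balpha}(u)=\balpha^{\top}\Sigma(u)\,\balpha$ for all $\balpha$, whose entries are recovered by polarisation, $\Sigma_{jk}(u)=\tfrac12\big(V_{e_j+e_k}(u)-V_{e_j}(u)-V_{e_k}(u)\big)$, with $e_j$ the canonical basis vectors. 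Since all projections converge to $\mathcal N\big(0,\balpha^{\top}\Sigma(u)\balpha\big)$, the Cramér--Wold theorem yields
\[
\big(\cC_u^0(\cT),\dots,\cC_u^d(\cT)\big)\ \xrightarrow[T\to\infty]{d}\ \mathcal N\big(0,\Sigma(u)\big).
\]

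Finally, the (non-)degeneracy is read off $\Sigma(u)$. If $u=-\infty$, Theorem~\ref{t:tcl}(2) gives $\balpha^{\top}\Sigma(u)\balpha=V_{\balpha}(u)=0$ for the nonzero vector $\balpha=(-1)^d(1,-1,\dots)$, so $\Sigma(u)$ is singular and the limiting law is supported on the proper hyperplane $\{x:\langle\balpha,x\rangle=0\}$, i.e.\ it degenerates. If $u>-\infty$, Theorem~\ref{t:tcl}(4) gives $\balpha^{\top}\Sigma(u)\balpha=V_{\balpha}(u)>0$ for every $\balpha\neq0$, hence $\Sigma(u)$ is positive definite and the limit is a genuine $(d+1)$-dimensional Gaussian. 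The only real ``obstacle'' is conceptual rather than computational: one must make sure that the scalar CLT of Theorem~\ref{t:tcl}, stated for arbitrary $\balpha$, already encodes joint convergence (this is exactly the Cramér--Wold device) and that parts (2) and (4), being assertions about the quadratic form $V_{\balpha}$, are equivalent to singularity, respectively positive definiteness, of $\Sigma(u)$.
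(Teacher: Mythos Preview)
Your proof is correct and is exactly the argument the paper has in mind: the paper simply states that the corollary ``is an immediate consequence of Theorem~\ref{t:tcl}'' without spelling out the details, and your Cram\'er--Wold reduction together with the identification of $V_{\balpha}(u)$ as the quadratic form $\balpha^{\top}\Sigma(u)\balpha$ is precisely that immediate consequence.
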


\subsubsection*{Sufficient conditions for (A2)}
Our main references for the finiteness of the second moment of the number of critical points are \cite{AL} and \cite{EF}.
As a matter of fact, the proofs of these results contain, as a by-product, the proof of the continuity of the second moment of $N(v,B)$ defined in \eqref{a:1} w.r.t.
$v$. Inspired by the tools of these papers, in   \ref{s:app:geman}, we provide a proof of the following proposition which extends their results and implies {\rm(A2)}.
\begin{proposition}\label{p:prop}
 Suppose that the random field $\X$ satisfies {\rm(A1)} and
 \begin{itemize} 
  \item[] (A5) : For all $\mu \in {\mathbb S}^{d-1}$, the random vector $X''(0) \mu$ has a N.D. distribution.
 \item[] (A6) : The integral $\displaystyle \int \tfrac{ \| r^{(4)}(t)-  r^{(4)}(0)\|} {\|t\|^d} \ \dd t$ converges at $0$.
 \end{itemize}
 Then,  
 \begin{itemize}
  \item For all compact $B \subset \R^d$, the second moment of $N(v,B)$ is finite.
 \item For $B$ sufficiently small, the second moment of $N(v,B)$ is continuous as a function of $v$.
 \end{itemize}
\end{proposition}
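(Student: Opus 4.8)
The plan is to start from the Kac--Rice formulas for the first and second factorial moments of $N(v,B)$ and to reduce everything to the behaviour of the integrand near the diagonal. Under (A1) the matrix $\Var(X'(0))$ is N.D., so the first moment
\[
 \E\big(N(v,B)\big)=\int_B \E\big(|\det X''(s)|\ \big|\ X'(s)=v\big)\ p_{X'(s)}(v)\,ds
\]
is finite and continuous in $v$ (the Gaussian density $p_{X'(s)}$ is bounded and continuous in $v$, and $X''(s)$ is Gaussian). For the second factorial moment one has, writing $\tau=t-s$ and using $\E(N(v,B)^2)=\E(N(v,B))+\E\big(N(v,B)(N(v,B)-1)\big)$,
\[
 \E\big(N(v,B)(N(v,B)-1)\big)=\int_{B^2}A_{\tau}(v)\,ds\,dt,
\]
\[
 A_{\tau}(v):=\E\big(|\det X''(s)\det X''(s+\tau)|\ \big|\ X'(s)=X'(s+\tau)=v\big)\ p_{(X'(s),\,X'(s+\tau))}(v,v),
\]
where by stationarity $A_\tau$ depends on $(s,t)$ only through $\tau$, so it suffices to control $\int_{\{\|\tau\|\le\operatorname{diam}(B)\}}A_\tau(v)\,d\tau$. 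Away from the diagonal, say $\|\tau\|\ge\delta$, the relevant Gaussian densities are continuous and locally bounded (using the N.D. of $\Var(X'(0))$ and the arguments of \cite{AL,EF}), so this region contributes a finite quantity depending continuously on $v$, locally uniformly. The remaining, genuinely delicate, region is $\tau=h\mu$ with $h=\|\tau\|\in(0,\delta)$ and $\mu\in\mathbb{S}^{d-1}$.

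First I would quantify the blow-up of the joint density. Introducing the rescaled gradient increment $\Delta_h:=h^{-1}\big(X'(s+h\mu)-X'(s)\big)=\int_0^1 X''(s+\theta h\mu)\,\mu\,d\theta$, a change of variables gives
\[
 p_{(X'(s),\,X'(s+\tau))}(v,v)=h^{-d}\,p_{(X'(s),\,\Delta_h)}(v,0),
\]
and $\Delta_h\to X''(s)\mu$ pathwise as $h\to 0$. For a stationary centred Gaussian field $X'(s)$ and $X''(s)$ are independent (their covariance vanishes since the third derivatives of the even function $r$ are zero at the origin), hence so are $X'(s)$ and $X''(s)\mu$; combining this with (A1) and (A5), the pair $(X'(s),X''(s)\mu)$ is N.D., and therefore $p_{(X'(s),\Delta_h)}(v,0)$ stays bounded as $h\to 0$, locally uniformly in $v$. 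Thus $p_{(X'(s),X'(s+\tau))}(v,v)=O(h^{-d})$.

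This estimate alone is not integrable against $h^{d-1}\,dh$, and the gain must come from the conditional determinant. Conditioning on $X'(s)=X'(s+\tau)=v$ is the same as conditioning on $X'(s)=v$ together with $\Delta_h=0$, and the event $\{\Delta_h=0\}$ forces $X''(s)\mu$ to be of order $h$: this is where the available $C^2$ regularity is just enough, but only once combined with Geman's condition (A6), which controls the increments of $X''$ and hence the remainder in $\Delta_h=X''(s)\mu+O(h)$ in the appropriate mean-square sense. Since then one column-combination of $X''(s)$, namely $X''(s)\mu$, is of order $h$, multilinearity of the determinant gives $|\det X''(s)|=O(h)$ on this event, and likewise $|\det X''(s+\tau)|=O(h)$ because $X''(s+\tau)\to X''(s)$ and $X''(s+\tau)\mu=X''(s)\mu+o(1)$. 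Hence the conditional expectation in $A_\tau(v)$ is $O(h^2)$, locally uniformly in $v$, so $A_{h\mu}(v)=O(h^{2-d})$ and
\[
 \int_{\{\|\tau\|<\delta\}}A_\tau(v)\,d\tau=O\Big(\int_0^\delta h^{2-d}\,h^{d-1}\,dh\Big)=O\Big(\int_0^\delta h\,dh\Big)<\infty,
\]
locally uniformly in $v$. Covering a general compact $B$ by finitely many off-diagonal and near-diagonal pieces yields finiteness of $\E(N(v,B)^2)$.

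Finally, for the continuity in $v$: on each piece the integrand $v\mapsto A_\tau(v)$ (and likewise the first-moment integrand) is continuous, by continuity of Gaussian conditional densities and expectations in the conditioning value; and for $v$ in a compact neighbourhood of any $v_0$ the bounds above provide a dominating function integrable in $\tau$ (resp. in $(s,t)$). Dominated convergence then gives continuity of $v\mapsto\E(N(v,B)^2)$. Taking $B$ of small diameter forces all of $B\times B$ into the controlled near-diagonal regime, which is why ``$B$ sufficiently small'' appears in the continuity statement. The main obstacle is the cancellation just described: reconciling the $h^{-d}$ blow-up of the joint density with the $O(h^2)$ decay of the conditional determinant when the paths are only $C^2$. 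This is precisely the role of (A5) --- so that $\Delta_h$ is genuinely non-degenerate in the limit and the conditioning $\Delta_h=0$ really constrains $X''(s)\mu$ --- and of (A6), Geman's condition on $r^{(4)}$, which bounds both the remainder $\Delta_h-X''(s)\mu$ and the increments of $X''$; keeping all the estimates locally uniform in $v$ is exactly what upgrades finiteness to continuity.
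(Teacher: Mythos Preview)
Your overall architecture matches the paper's: Kac--Rice of order two, the $h^{-d}$ blow-up of the joint density of $(X'(s),X'(s+\tau))$ via (A5), and compensation from the conditional determinant near the diagonal. But the quantitative step in the middle has a genuine gap.

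You assert that the conditional expectation of $|\det X''(s)\det X''(s+\tau)|$ is $O(h^2)$, and your final integral $\int_0^\delta h\,dh$ does not involve (A6) at all. If that bound were correct, (A6) would be superfluous --- but Geman-type conditions are known to be necessary already in dimension one, so something is wrong. The issue is your heuristic ``$\Delta_h=0$ forces $X''(s)\mu=O(h)$'': this would require the remainder $\Delta_h-X''(s)\mu=\int_0^1\big(X''(s+\theta h\mu)-X''(s)\big)\mu\,d\theta$ to be $O(h)$, i.e.\ $X''$ Lipschitz, essentially $r\in C^5$. Under (A1) the paths are only $C^2$, and (A6) is a strictly weaker, integral control on the modulus of continuity of $r^{(4)}$; it does \emph{not} give a pointwise $O(h)$ bound.

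The paper repairs this as follows. After Cauchy--Schwarz it compares the true conditioning $\mathcal C=\{X'(0)=u,\ h^{-1}(X'(h\mu)-X'(0))=0\}$ to its formal limit $\mathcal C'=\{X'(0)=u,\ X''(0)\mu=0\}$, under which $\E_{\mathcal C'}\!\big(\det^2 X''(0)\big)=0$ exactly. Since $\E(\det^2 M)$ is a polynomial in the entries of the mean and covariance of $M$, it is locally Lipschitz in those entries, giving
\[
 \E_{\mathcal C}\!\big(\det^2 X''(0)\big)\ \le\ \ct\big(\|\Var_{\mathcal C}(X''(0))-\Var_{\mathcal C'}(X''(0))\|+\|\E_{\mathcal C}(X''(0))-\E_{\mathcal C'}(X''(0))\|\big),
\]
uniformly in $\mu\in\mathbb S^{d-1}$ by compactness. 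Regression formulas and Taylor with integral remainder then bound the right-hand side by $\ct\int_0^1\|r^{(4)}(\xi h\mu)-r^{(4)}(0)\|\,d\xi$. Multiplying by the $h^{-d}$ density bound and integrating over $B$ produces precisely the integral appearing in (A6). So the correct bound on the conditional determinant is not a fixed power of $h$ but the modulus of continuity of $r^{(4)}$, and (A6) is exactly the condition that makes the Kac--Rice integral converge. Your intuition for \emph{why} the determinant degenerates is right; what is missing is a quantitative comparison to the limiting conditioning that does not overstate the available regularity.
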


\section{Hermite and Wiener Expansions} \label{s:H}
In this section, we give the explicit expression of the chaotic expansion for the numbers of critical points.
We do it in two ways. The first one is to get a Hermite expansion ``\`a la Kratz-Le\'on'' \cite{K:L}.
Secondly, we translate each component in the expansion into a Multiple Wiener-It\^o integral.
The starting point of our analysis is an integral formula for $\crit^k(u,\cT)$.
Set $W(\ve)$ for the volume of a ball of radius $\ve$ in $\R^d$ and $\cA_k$ for the set of symmetric $d\times d$ matrices with index $k$, $k=0,1,\cdots, d$.
\begin{lemma}
Suppose $X(\cdot) $ is a random field satisfying {\rm (A1)} and {\rm (A2)}.
With the above notation, for $k=0,1,\cdots, d$, and $u\in\R$, we have
\begin{subequations}
\begin{equation} \label{e:aprox-cCuk}
\crit^k(u,\cT)
= \lim_{\ve\to 0}  \frac{(-1)^k}{ W(\ve)} \int_{\cT} {\det} (X''(t)) \indicator_{\cA_k}(X''(t)) \indicator_{[u,\infty)} (X(t)) \cdot  \indicator_{ \|X'(t)\|
\leq \ve} \dd t,
\end{equation}
both a.s. and in the $L^2$-sense. When $u=-\infty$, we have
\begin{equation} \label{e:aprox-cCuk2}
\crit^k(-\infty,\cT)
= \lim_{\ve\to 0}  \frac{(-1)^k}{ W(\ve)} \int_{\cT} {\det} (X''(t)) \indicator_{\cA_k}(X''(t))  \cdot  \indicator_{ \|X'(t)\|
\leq \ve} \dd t.
\end{equation}
\end{subequations}
\end{lemma}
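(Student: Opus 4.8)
\emph{Approach.} This is a Kac--Rice / Kratz--Le\'on type representation, and the plan is to first establish the almost sure convergence by localizing around the (finitely many) critical points via the inverse function theorem, and then to upgrade to $L^2$ convergence using Geman's condition (A2) through the area formula.

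\emph{Step 1: almost sure convergence.} I would begin by recording the geometric picture valid on an event of probability one. By (A1) and \cite{aal}, Prop.~2.1, the paths are Morse, so every critical point is nondegenerate; by a Bulinskaya-type argument (using that $\Var(X'(t))$, and in fact $\Var(X(t),X'(t))$, is N.D.) there is a.s.\ no critical point on $\partial\cT$ and, for the fixed level $u$, no critical point $t$ with $X(t)=u$. Since nondegenerate critical points are isolated and $\cT$ is compact, there are then finitely many critical points $t_1,\dots,t_M$, all interior to $\cT$. Around each $t_j$, the matrix $X''(t_j)$ being invertible and $X'\in C^1$, the inverse function theorem provides a neighborhood $U_j$ on which $X'$ is a $C^1$-diffeomorphism onto a neighborhood of $0$ with differential $X''(t_j)$ at $t_j$. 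For $\ve$ small the $U_j$ are disjoint and contain $\{t\in\cT:\|X'(t)\|\le\ve\}$, so the integral in \eqref{e:aprox-cCuk} splits over the $U_j$; on each $U_j$ I would change variables $v=X'(t)$ (Jacobian factor $1/|\det X''|$), turning the $j$-th piece divided by $W(\ve)$ into the average over $\{\|v\|\le\ve\}$ of $\operatorname{sgn}(\det X'')\,\indicator_{\cA_k}(X'')\,\indicator_{[u,\infty)}(X)$, evaluated along $(X'|_{U_j})^{-1}(v)\to t_j$. By continuity, and since $\indicator_{\cA_k}$ is locally constant near the nondegenerate matrix $X''(t_j)$ while $\indicator_{[u,\infty)}(X(\cdot))$ is locally constant near $t_j$ because $X(t_j)\neq u$, this average converges to $\operatorname{sgn}(\det X''(t_j))\,\indicator\{i(t_j)=k\}\,\indicator\{X(t_j)>u\}$. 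Multiplying by $(-1)^k$ and using $\operatorname{sgn}(\det X''(t_j))=(-1)^{i(t_j)}$ collapses the $j$-th contribution to $\indicator\{i(t_j)=k,\,X(t_j)>u\}$; summing over $j$ gives $\crit_u^k(\cT)$, which is the claimed a.s.\ limit. The case $u=-\infty$ is identical after deleting $\indicator_{[u,\infty)}(X(t))$ (and skipping the Bulinskaya step for $u$).

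\emph{Step 2: $L^2$ convergence.} Here I would produce an $L^2$-dominating family. Set $Y_\ve:=\frac{1}{W(\ve)}\int_\cT|\det X''(t)|\,\indicator_{\|X'(t)\|\le\ve}\,dt$, so that the integrand $D_\ve$ in \eqref{e:aprox-cCuk} satisfies $|D_\ve|\le Y_\ve$, and Step 1 applied with $|\det|$ gives $Y_\ve\to N(0,\cT)$ a.s. The area formula for $X'\in C^1$ rewrites $W(\ve)Y_\ve=\int_{\|v\|\le\ve}N(v,\cT)\,dv$, whence by Jensen $\E[Y_\ve^2]\le\frac{1}{W(\ve)}\int_{\|v\|\le\ve}\E[N(v,\cT)^2]\,dv$. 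Tiling $\cT$ into finitely many boxes small enough for (A2) and applying Cauchy--Schwarz shows $v\mapsto\E[N(v,\cT)^2]$ is finite and continuous at $0$, so this upper bound tends to $\E[N(0,\cT)^2]<\infty$; combined with Fatou this forces $\E[Y_\ve^2]\to\E[N(0,\cT)^2]$, hence $Y_\ve^2\to N(0,\cT)^2$ in $L^1$. Finally $(D_\ve-\crit_u^k(\cT))^2\le 2Y_\ve^2+2\crit_u^k(\cT)^2$ with $\crit_u^k(\cT)\le N(0,\cT)\in L^2$; the right side converges in $L^1$ and the left side to $0$ a.s., so generalized dominated convergence yields $\E[(D_\ve-\crit_u^k(\cT))^2]\to0$. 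The same works for $u=-\infty$.

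\emph{Main obstacle.} Step 1 is essentially bookkeeping with the inverse function theorem. The delicate point is Step 2: under (A1) alone $\E[N(0,\cT)^2]$ need not even be finite, and the whole $L^2$ statement rests on controlling $Y_\ve$, which is possible only because the area formula exhibits it as an average of $v\mapsto N(v,\cT)$ to which the continuity hypothesis (A2) applies — modulo the routine but necessary tiling of $\cT$ into small boxes so that (A2), stated for small $B$, can be invoked.
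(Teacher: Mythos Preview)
The paper states this lemma without proof, treating it as a standard Kac--Rice approximation in the spirit of Kratz--Le\'on \cite{K:L} and Estrade--Le\'on \cite{EL}; there is therefore no argument in the paper to compare against. Your two-step plan is exactly the classical route and is essentially correct: Step~1 (localize via the inverse function theorem at the finitely many Morse critical points, plus Bulinskaya to exclude boundary critical points and critical points at height $u$) is clean, and Step~2 (area formula $W(\ve)Y_\ve=\int_{\|v\|\le\ve}N(v,\cT)\,dv$, then Jensen, then (A2)) is the right way to manufacture the $L^2$ domination.

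One small imprecision: Cauchy--Schwarz on the tiling gives finiteness of $\E[N(v,\cT)^2]$ but not quite its continuity at $0$, since bounding the cross terms $\E[N(v,B_i)N(v,B_j)]$ by $\sqrt{\E[N(v,B_i)^2]\,\E[N(v,B_j)^2]}$ only yields a continuous \emph{upper bound}. The cleanest fix is to bypass continuity of the global second moment altogether: run your Jensen/Fatou sandwich on each small box $B_i$ separately to obtain $Y_\ve(B_i)\to N(0,B_i)$ in $L^2$, and then sum the finite collection to get $Y_\ve(\cT)\to N(0,\cT)$ in $L^2$. This is a cosmetic adjustment; the substance of your argument is sound.
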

Note that in the second case the integrand is a product of two independent factors.
The proof of this lemma follows closely the same lines as that of \citep[Prop.
1.2]{EL}, actually, the only novelty is the introduction of the indicator of ${\cal A}_k$.
\subsection{Hermite expansion}
This section is inspired by \citep{EL}. The Hessian $X''(t)$ can be viewed either as a symmetric $d\times d$ matrix or as a size $\frac12d(d+1)$ vector.
Set $D=\frac12(d+1)(d+2)$ and introduce the $\R^D$-valued stationary Gaussian field $\rX$ by
\begin{subequations}
\begin{equation} \label{e:bx}
\rX(t) = \big( X'(t), X''(t), X(t) \big), t\in\R^d.
\end{equation}
The stationarity of $X$, see (A1), implies that the variance-covariance matrix of $\rX(t)$, $\Xi=\E(\rX(t) \rX(t)^\top)$, does not depend on $t$.
Let $\Lambda$ be such that $\Lambda\Lambda^\top = \Xi$. Note that $\Lambda$ can be chosen, again by the stationarity of $X$, as a block matrix $\Lambda=\Big(\begin{smallmatrix}\Lambda_1 & 0\\ 0& \Lambda_2\end{smallmatrix}\Big)$ due to the independence of $X'(t)$ from $(X''(t),X(t))$.
Thus, we can write
\begin{equation} \label{e:XeY}
\rX(t) = \Lambda \rY(t),
\end{equation}
\end{subequations}
with $\rY(t)$ a standard Gaussian random vector in $\R^D$ for each fixed $t$.
In order to introduce the coefficients of the expansion of $\crit^{\balpha} (u,\cT)$ we exploit the factorization in the integrand in the r.h.s.
of \eqref{e:aprox-cCuk}. For $\underline{y}\in\R^d$ and $\overline{y}=(x,z)\in \R^{\frac12 d(d+1)}\times\R$, set ${\bf{y}}=(\underline{y},\overline{y})\in \R^{D}$ and define 
\begin{equation*}
\tilde{G}_\ve({\bf y}) = G_\ve(\Lambda {\bf y}) = \tilde{\delta}_\ve(\underline{y})\cdot \tilde{f}_k(\overline{y}),
\end{equation*}
with $ \delta_\ve(\cdot) := \frac 1{W( \ve)} \indicator_{ \|\cdot\|
\leq \ve}$, $\tilde{\delta}_\ve = \delta_{\ve}\circ\Lambda_1$, $f_k(x, z) = (-1)^k \det(x) \indicator_{\cA_k}(x) \indicator_{[u,\infty)} (z)$ and $\tilde{f}_k = f_k\circ\Lambda_2$.
Since these functions are in $L^2(\phi_d)$ and $L^2(\phi_{D-d})$ respectively, we can consider their Hermite expansions.
Define the (tensorial) Hermite polynomial, for $p\in\N$ and $\mathbf m=(m_1,\ldots, m_p)\in\N^p$, by $H_{\otimes_{\mathbf m}} := H_{m_1} \otimes \cdots\otimes H_{m_p}$, where $\otimes$ stands for the tensorial product and $H_m$ is the Hermite polynomial of degree $m$.
Fix $k=0,\cdots,d$ and, for ${\bf{n}}=(\underline{n},\overline{n})\in \N^d \times \N^{D-d}$, set
\begin{equation} \label{coeff:a}
   a_{k}({\bf{n}}):= d({\underline{n}}) c(\tilde{f}_{k},{\overline{n}}), 
\end{equation}
where $c(\tilde{f}_{k},{\overline{n}})$ is the $\overline{n}$-th Hermite coefficients of $\tilde{f}_k$, that is:
\[
	c(\tilde{f}_{k},{\overline{n}}) = \frac{1}{\overline{n}!}\int_{\R^{D-d}} 
	\tilde{f}_{k}(\overline{y})H_{\otimes_{\overline{n}}}(\overline{y})\phi_{D-d}	(\overline{y})\dd\overline{y},
\]
and $d({\underline{n}}) = \frac{1}{{\underline{n}}!} \frac{H_{ \otimes{\underline{n}}}({\underline{0}})}{(2\pi )^{d/2}} =\mathop{\lim}\limits_{\ve\to0} c(\tilde{\delta}_\ve,\underline{n})$ with $c(\tilde{\delta}_\ve,\underline{n}) =\frac{1}{{\underline{n}}!}\int_{\R^{d}} \tilde{\delta}_\ve(\underline{y})H_{ \otimes{\underline{n}}}(\underline{y})\phi_d(\underline{y})\dd\underline{y}$ the $ \underline{n}$-th Hermite coefficients of $\tilde{\delta}_\ve$.
\smallskip

The $L^2$ convergence in \eqref{e:aprox-cCuk} implies the following proposition. We still need some notation, for $q\in\N$, let ${\cal J}_q:= \{{\bf{n}}\in \N^D: |{\bf{n}}|=q\}$ and, for $\bf n\in{\cal J}_q$, set $\cH_{{\bf n}}(\cT)= \int_\cT H_{\otimes_{\bf n}} (\rY(t)) \dd t$ and $a_{\balpha} ({\bf n}) = \sum^d_{k=0} \alpha_k a_{k} ({\bf n})$ with $\balpha\in\R^{d+1}$.
\begin{proposition} \label{p:hexp}
Let $X(\cdot) $ be a random field that satisfies {\rm(A1)} and {\rm(A2)}.
Then, with the notation just introduced, for $\balpha\in\R^{d+1}$, in the $L^2(\Omega)$-sense we can expand $\cC^{\balpha} (u,\cT) = \sum_{q=1}^{\infty} {\cC}_q^{\balpha} (u,\cT)$ with
\begin{equation*}
 {\cC}_q^{\balpha} (u,\cT) = \frac{1}{(2T)^{d/2}}\sum_{{\bf{n}}\in {\cal J}_q} a_{\balpha}({\bf{n}}) \cH_{{\bf{n}}} (\cT).
\end{equation*}
That is, ${\cC}_q^{\balpha} (u,\cT)$ is the $q$-th chaotic component of $\cC^{\balpha} (u,\cT)$.
\end{proposition}

\subsection{Multiple It\^o-Wiener Integrals (MWI)}
We use \emph{spectral} stochastic integrals as defined in \cite{major} and \cite[Ch.9]{peccati-taqqu}.
We choose to follow the \emph{isonormal Gaussian process} framework in \cite[Ch.9]{peccati-taqqu}, rather than the explicit MWI construction in \cite{major}.
\subsubsection{Generalities}
Let ${\cal H}$ be the set of complex-valued Hermitian square integrable functions w.r.t.
Lebesgue measure $\dd\nu$ in $\R^d$, that is, ${\cal H}=L^2_H(\R^d,\dd\nu)= \{\psi:\R^d\to\C: \psi(-\nu) =\overline{\psi(\nu)}, \|\psi\|_{{\cal H}}^2=\int_{\R^d}|\psi(\nu)|^2\dd\nu<\infty\}$.
Let $\hB$ be a complex Hermitian Brownian measure on $\R^d$, defined on a probability space $(\Omega,{\cal U},\P)$ such that ${\cal U}$ is generated by $\hB$.
The (real-valued) Wiener integral w.r.t. $\hB$, denoted by 
\begin{equation*}
I^{\hB}_1(\psi) = \int_{\R^d}\psi(\nu) \dd\hB(\nu),
\end{equation*}
is an isometry from ${\cal H}$ into $L^2(\Omega)$, see \cite{major} and \cite[Ch.9]{peccati-taqqu}.
That is, for $\psi_1,\psi_2\in {\cal H}$, we have
\begin{align} \label{e:isometria1}
 \E\Big(I^{\hB}_1(\psi_1)I^{\hB}_1(\psi_2)\Big) &= \int_{\R^d}\psi_1(\nu)\overline{\psi_2(\nu)}\dd\nu = \int_{\R^d}\psi_1(\nu)\psi_2(-\nu)\dd\nu \notag \\
 &= \<\psi_1,\psi_2\>_{{\cal H}}.
\end{align}

The $q$-fold multiple Wiener-It\^o integral w.r.t. $\hB$ is defined, for $\psi_j\in{\cal H},j\leq D$ and ${\bf n}\in{\cal J}_q$, by
\begin{equation*}
 I^{\hB}_q\big(\psi^{\otimes n_1}_{1}\otimes\cdots\otimes \psi^{\otimes n_D}_{D}\big) =\prod^{D}_{j=1} H_{n_j}\Big(I^{\hB}_1(\psi_{j})\Big).
\end{equation*}
Let ${\cal H}_q$ denote the domain of $I^{\hB}_q$, i.e.: the set of those $\psi:(\R^d)^q\to\C$ such that $\psi(-\nu_1,\cdots,-\nu_q) =\overline{\psi(\nu_1,\cdots,\nu_q)}$ and $\|\psi\|^2_{{\cal H}_q} =\int_{\R^{dq}}|
\psi(\nu_1,\cdots,\nu_q)|^2\dd\nu_1\cdots \dd\nu_q$ is finite. The latter integral defines the norm and (thus, also) the inner product in ${\cal H}_q$.
To describe the isometry induced by $I^{\hB}_q$, we introduce the symmetrization of a kernel $\psi\in{\cal H}_q$: 
\begin{equation*} 
 Sym(\psi)(\nu_1,\cdots,\nu_q) := \frac{1}{q!}\sum_{\pi\in {\cal S}_q} \psi(\nu_{\pi(1)},\cdots,\nu_{\pi(q)}),
\end{equation*}
with ${\cal S}_q$ being the group of permutations of $[q]$.
For $\psi\in{\cal H}_q$, we have that $I^{\hB}_q(\psi) = I^{\hB}_q(Sym(\psi))$. Moreover, for $\psi_1\in{\cal H}_q$ and $\psi_2\in{\cal H}_p$, it holds that
\begin{equation} \label{e:isometria}
 \E\big(I^{\hB}_q(\psi_1) I^{\hB}_p(\psi_2)\big) = \delta_{pq} q!
\<Sym(\psi_1),Sym(\psi_2)\>_{{\cal H}_{q}},
\end{equation}
$\delta_{pq}$ being the Kroenecker symbol. Hence, $I^{\hB}_q$ is an isometry from ${\cal H}_q^s:=\{\psi\in {\cal H}_q: \psi=Sym(\psi)\}$, with the modified norm $\sqrt{q!}\|\cdot\|_{{\cal H}_q}$, onto its image, which is called the $q$-th Wiener chaos ${\cal K}_q$.
By convention, ${\cal K}_0=\R$. 

It is well known, see e.g. \citep[Ch.8]{peccati-taqqu} that the (orthogonal) sum $\bigoplus^\infty_{q=0}{\cal K}_q=L^2(\Omega)$.
In other words, if $X\in L^2(\Omega)$, there exists an unique sequence of (symmetric) kernels $\psi_q\in {\cal H}^s_q: q\geq 1$ such that $ X=\sum^\infty_{q=0} I^{\hB}_q(\psi_q)$.
In particular, $I^{\hB}_0(\psi_0)= \psi_0=\E(X)$. The key point is that chaotic r.v.
(i.e: Multiple Wiener-It\^o Integrals) are well suited to study asymptotic normality. 

Now, we restrict and adapt to our framework \citep[Th.
11.8.1]{peccati-taqqu} which states that, in this setting, joint convergence is equivalent to marginal convergence.
Besides, the third item below provides a practical criterion to check it.
For $r\leq p\wedge q$, denote $\bar{\otimes}_r$ the $r$-th contraction operator: $(\phi,\psi)\in{\cal H}^s_p \times {\cal H}^s_q\mapsto \phi\bar{\otimes}_r \psi \in {\cal H}_{p+q-2r}$ defined by
\begin{multline*}
 \phi\bar{\otimes}_r \psi (\nu_1,\cdots,\nu_{p+q-2r})
 =\int_{\R^{dr}} \phi(z_1,\cdots,z_r;\nu_1,\cdots,\nu_{p-r})\\
  \cdot \psi(-z_1,\cdots,-z_r;\nu_{p-r+1},\cdots,\nu_{p+q-2r}) \dd z_1\cdots \dd z_r.
\end{multline*}

\begin{theorem}(\cite{peccati-taqqu}, Th.11.8.1). \label{t:jvsm}
Let $Q\geq 1$. For $q\leq Q$, consider the sequence of kernels $(\psi_{q,n})_{n\geq 1}$ with $\psi_{q,n}\in{\cal H}^s_q$.
Then, as $n\to\infty$ the following conditions are equivalent:
\begin{enumerate}
  \item $\big(I^{\hB}_1(\psi_{1,n}),\cdots,I^{\hB}_{Q}(\psi_{Q,n})\big)$ converges in law towards a centered normal random vector with variance $\diag(\sigma^2_1,\cdots,\sigma^2_Q)$.
 \item For each $q\leq Q$, $I^{\hB}_q(\psi_{q,n})$ converges in law towards a centered normal r.v. with variance $\sigma^2_q$.
 \item For each $q\leq Q$ and $r=1,\cdots,q-1$, the norms of the contractions $\|\psi_{q,n}\bar{\otimes}_r \psi_{q,n}\|_{{\cal H}_q}$ converge to $0$.
 \end{enumerate}
\end{theorem}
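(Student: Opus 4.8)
The plan is to derive Theorem~\ref{t:jvsm} from two pillars of the Malliavin--Stein theory (see \cite{peccati-taqqu}): the Fourth Moment Theorem of Nualart and Peccati, which gives the equivalence (2)$\Leftrightarrow$(3) inside a single fixed chaos, and the Peccati--Tudor theorem, which gives (2)$\Leftrightarrow$(1), that is, the passage from coordinatewise to joint convergence. A recurring point is that the chaoses $\mathcal{K}_1,\dots,\mathcal{K}_Q$ carry \emph{pairwise distinct} orders, so that $\E\bigl(I^{\hB}_p(\psi_{p,n})I^{\hB}_q(\psi_{q,n})\bigr)=0$ for $p\neq q$ by orthogonality of the Wiener chaos; this forces the limiting covariance in (1) to be the diagonal matrix $C:=\diag(\sigma_1^2,\dots,\sigma_Q^2)$, and it is also what kills the cross terms below. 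One may assume $\sigma_q^2>0$ for every $q$, since a component with $\sigma_q^2=0$ converges to $0$ in $L^2$ and may be appended to the limit vector for free.

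\emph{Step 1 (the univariate statement (2)$\Leftrightarrow$(3)).} Fix $q\le Q$, write $F_n=I^{\hB}_q(\psi_{q,n})$ and $\sigma_{q,n}^2:=q!\,\|\psi_{q,n}\|_{\cH_q}^2=\E(F_n^2)\to\sigma_q^2$. Starting from $D_\lambda F_n=q\,I^{\hB}_{q-1}(\psi_{q,n}(\cdot,\lambda))$ and applying the product formula for multiple integrals, one gets the identity
\[
\tfrac1q\,\|DF_n\|_{\cH}^2-\sigma_{q,n}^2=q\sum_{r=1}^{q-1}(r-1)!\,\binom{q-1}{r-1}^2\, I^{\hB}_{2q-2r}\bigl(\psi_{q,n}\,\bar{\otimes}_r\,\psi_{q,n}\bigr),
\]
so that, by the isometry \eqref{e:isometria}, $\Var\bigl(\tfrac1q\|DF_n\|_{\cH}^2\bigr)$ is a positive linear combination of the squared norms $\|\psi_{q,n}\,\bar{\otimes}_r\,\psi_{q,n}\|^2$, $r=1,\dots,q-1$ (up to replacing kernels by their symmetrizations, which does not affect whether these norms vanish). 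The same product formula expands $\E(F_n^4)-3\sigma_{q,n}^4$ as another positive combination of the same contraction norms. Hence the three ``smallness'' conditions --- $\|\psi_{q,n}\,\bar{\otimes}_r\,\psi_{q,n}\|\to0$ for all $r$; $\E(F_n^4)\to3\sigma_q^4$; and $\Var(\tfrac1q\|DF_n\|_{\cH}^2)\to0$ --- are all equivalent. Next, Stein's method: writing the Stein equation for $N(0,\sigma_{q,n}^2)$ and using the Malliavin integration-by-parts formula $\E[F_n g(F_n)]=\E\bigl[g'(F_n)\,\langle DF_n,-DL^{-1}F_n\rangle_{\cH}\bigr]=\tfrac1q\E\bigl[g'(F_n)\,\|DF_n\|_{\cH}^2\bigr]$ --- here $L$ is the Ornstein--Uhlenbeck generator and $-DL^{-1}F_n=\tfrac1q DF_n$ on $\mathcal{K}_q$ --- one obtains $d_{TV}\bigl(F_n,N(0,\sigma_{q,n}^2)\bigr)\le c\,\sigma_{q,n}^{-2}\sqrt{\Var(\tfrac1q\|DF_n\|_{\cH}^2)}$; vanishing of the contractions therefore gives $F_n\Rightarrow N(0,\sigma_q^2)$, which is (3)$\Rightarrow$(2). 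For the converse, hypercontractivity on the fixed chaos $\mathcal{K}_q$ bounds $F_n$ in every $L^p$, so convergence in law to $N(0,\sigma_q^2)$ forces $\E(F_n^4)\to3\sigma_q^4$, hence (2)$\Rightarrow$(3).

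\emph{Step 2 (from marginal to joint convergence, (2)$\Leftrightarrow$(1)).} Only (2)$\Rightarrow$(1) needs an argument; the converse is immediate by projecting onto coordinates. Set $\mathbf{F}_n=\bigl(I^{\hB}_1(\psi_{1,n}),\dots,I^{\hB}_Q(\psi_{Q,n})\bigr)$ and form the Malliavin interaction matrix $\Gamma_n=(\Gamma_n^{p,q})_{p,q\le Q}$ with
\[
\Gamma_n^{p,q}:=\bigl\langle DI^{\hB}_p(\psi_{p,n}),\,-DL^{-1}I^{\hB}_q(\psi_{q,n})\bigr\rangle_{\cH}=\tfrac1q\,\bigl\langle DI^{\hB}_p(\psi_{p,n}),\,DI^{\hB}_q(\psi_{q,n})\bigr\rangle_{\cH}.
\]
The multivariate Stein--Malliavin inequality controls the distance (for smooth test functions) between $\mathbf{F}_n$ and $N(0,C)$ by $\bigl(\sum_{p,q\le Q}\E\bigl[(C_{p,q}-\Gamma_n^{p,q})^2\bigr]\bigr)^{1/2}$, so it suffices to prove $\Gamma_n^{p,q}\to\delta_{pq}\sigma_q^2$ in $L^2$. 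For $p=q$ this is $\tfrac1q\|DI^{\hB}_q(\psi_{q,n})\|_{\cH}^2\to\sigma_q^2$, which follows from $\E[\Gamma_n^{q,q}]=\sigma_{q,n}^2\to\sigma_q^2$ and the variance bound of Step~1. For $p\ne q$, first $\E[\Gamma_n^{p,q}]=\E\bigl(I^{\hB}_p(\psi_{p,n})I^{\hB}_q(\psi_{q,n})\bigr)=0$; second, the product formula expands $\langle DI^{\hB}_p(\psi_{p,n}),DI^{\hB}_q(\psi_{q,n})\rangle_{\cH}$ into non-constant multiple integrals of orders $p+q-2s$, $1\le s\le p\wedge q$, whose $L^2$ norms are controlled by the cross-contraction norms $\|\psi_{p,n}\,\bar{\otimes}_s\,\psi_{q,n}\|$, and Cauchy--Schwarz on the contraction kernels gives $\|\psi_{p,n}\,\bar{\otimes}_s\,\psi_{q,n}\|^2\le\|\psi_{p,n}\,\bar{\otimes}_{p-s}\,\psi_{p,n}\|\cdot\|\psi_{q,n}\,\bar{\otimes}_{q-s}\,\psi_{q,n}\|$. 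Since $p\neq q$, for every admissible $s$ at least one of $p-s$, $q-s$ lies in $\{1,\dots,p-1\}$, resp.\ $\{1,\dots,q-1\}$, so the corresponding factor tends to $0$ by (3) while the other stays bounded (being another self-contraction norm, or $\|\psi_{\cdot,n}\|^2$). Hence $\Gamma_n^{p,q}\to0$ in $L^2$, so $\Gamma_n\to C$ in $L^2$, and the Stein--Malliavin bound yields $\mathbf{F}_n\Rightarrow N(0,C)$.

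\emph{Main obstacle.} The crux of the whole argument --- and essentially the only place where genuine analysis occurs --- is Step~1: the exact product-formula expansions of $\tfrac1q\|DF_n\|_{\cH}^2$ and of $\E(F_n^4)$ in terms of contractions (which make condition (3), the fourth-moment condition and the variance condition genuinely equivalent), together with the Stein bound that converts the variance condition into a total-variation estimate. Granting this, the multivariate part is essentially bookkeeping: orthogonality of distinct chaoses forces the off-diagonal covariances to vanish, Cauchy--Schwarz reduces cross-contractions to self-contractions, and the multivariate Stein--Malliavin inequality upgrades coordinatewise convergence to joint convergence. (One could instead argue throughout via the method of cumulants and diagram formulae, the fourth cumulant again playing the decisive role, but the Malliavin--Stein route is shorter and consistent with the tools used elsewhere in the paper.)
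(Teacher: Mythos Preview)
The paper does not prove Theorem~\ref{t:jvsm}; it is quoted as an external result from \cite{peccati-taqqu}, Theorem~11.8.1 (with the Peccati--Tudor direction also referenced implicitly via \cite{Tu:Pe} and \cite{Nou:Pec}), and is used only as a black box in the proof of Theorem~\ref{t:tcl}. There is therefore no ``paper's own proof'' to compare against.

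That said, your sketch is a correct outline of the standard Malliavin--Stein argument: Step~1 is the Nualart--Peccati Fourth Moment Theorem, and Step~2 is the Peccati--Tudor reduction of joint to marginal convergence via the multivariate Stein bound and the cross-contraction inequality $\|\psi_{p,n}\bar{\otimes}_s\psi_{q,n}\|^2\le\|\psi_{p,n}\bar{\otimes}_{p-s}\psi_{p,n}\|\cdot\|\psi_{q,n}\bar{\otimes}_{q-s}\psi_{q,n}\|$. Two small points worth flagging. First, the equivalence as stated tacitly presupposes the variance convergence $q!\,\|\psi_{q,n}\|_{\cH_q}^2\to\sigma_q^2$ (otherwise (3) alone cannot pin down the value $\sigma_q^2$ in (2)); you use this in Step~1 but it is worth making the standing assumption explicit. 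Second, the paper works with the complex Hermitian spectral integrals $I^{\hB}_q$ and the contraction $\bar{\otimes}_r$ that pairs $z$ with $-z$; your Malliavin-calculus identities are written for the real isonormal setting, so strictly speaking one should either invoke the isomorphism between the two pictures (as in \cite{peccati-taqqu}, Ch.~9) or rewrite the product formula in the Hermitian framework. Neither point affects the validity of the argument.
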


\subsubsection{MWI representation}
As said above, in the second step we translate $\cC^{\balpha}_{q}(u,\cT),q\geq 1$ into a Wiener-It\^o integral.
Assumption {\rm (A3)} implies that $\rX (\cdot)$ given by \eqref{e:bx} (and thus $\rY(\cdot)$ defined in \eqref{e:XeY}) has a spectral density.
This fact allows us to find explicit (Hermitian) orthonormal kernels $\widehat{\psi}_{t,j}\in {\cal H}$ such that
\begin{equation} \label{e:Ysometria}
 Y_j(t) = I^{\hB}_1(\widehat{\psi}_{t,j}),\quad j\in[D].
\end{equation}
Indeed, let $f$ stand for the spectral density of $X$, thus, we can write $X(t) = \int_{\R^d}e^{it\cdot\nu}\sqrt{f(\nu)}\dd\hB(\nu)$.
Taking partial derivatives and denoting
\begin{equation*} 
\omega(\nu) := \Big( (i\nu_j)_{1\leq j\leq d}; (-\nu_j\nu_k)_{1\leq j\leq k\leq d};1\Big),
\end{equation*}
we get $\rX(t) = \int_{\R^d}e^{it\cdot\nu} \omega(\nu) \sqrt{f(\nu)}\dd\hB(\nu)$ as a $D$-dimensional spectral representation.
In consequence, using \eqref{e:XeY}, we conclude that,
\begin{equation*}
 \rY(t) = \int_{\R^d}e^{it\cdot\nu}(\Lambda^{-1}\omega(\nu)) \sqrt{f(\nu)}\dd\hB(\nu).
\end{equation*}
Thus, taking coordinates, the kernels in \eqref{e:Ysometria} are given by
\begin{subequations}
\begin{align} \label{e:nucleos-a}
   & \widehat{\psi}_{t,j}(\nu) = e^{it\cdot\nu} \psi_{j}(\nu) \sqrt{f(\nu)},\quad j\in[D],\\
  & {\rm with}\quad \psi_{j}(\nu) = (\Lambda^{-1}\omega(\nu))_j.
\label{e:nucleos-b}
\end{align}
\end{subequations}

\begin{proposition} \label{p:hexp2}
Assume that $\X$ satisfies $(A1)-(A3)$. Then, with the notation in Proposition \ref{p:hexp}, we can write $\cC^{\balpha}_{q}(u,\cT) =I^B_q\big(g^{\balpha}_{q}(u,\cT)\big)$ with
\begin{equation*}
  g^{\balpha}_{q}(u,\cT) = \frac{1}{(2T)^{d/2}} \sum_{{\bf{n}}\in {\cal J}_q} a_{\balpha}({\bf n}) \int_{\cT} \widehat{\psi}^{\otimes n_1}_{t,1}\otimes\cdots\otimes \widehat{\psi}^{\otimes n_D}_{t,D}\dd t.
\end{equation*}
\end{proposition}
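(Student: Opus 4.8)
The plan is to establish the representation separately for each fixed chaos order $q\ge 1$. Starting from the Hermite expansion of Proposition~\ref{p:hexp}, I would rewrite each multi-spectrum term $\cH_{\bf n}(\cT)=\int_\cT H_{\otimes_{\bf n}}(\rY(t))\,dt$ as a multiple Wiener--It\^o integral by combining two ingredients: (i) the product formula for multiple integrals built over an orthonormal family, applied pointwise in $t$, and (ii) a stochastic Fubini argument to bring the deterministic integral $\int_\cT\,dt$ inside $I^{\hB}_q$. Summing over the (finite) index set ${\cal J}_q$ with the weights $a_{\balpha}({\bf n})$ and normalizing then yields the formula for $s^{\balpha}_{q,T}$.

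First, fix $t\in\R^d$. Because $\rY(t)$ is a standard Gaussian vector in $\R^D$, the isometry \eqref{e:isometria1} gives $\langle\psi_{t,j},\psi_{t,k}\rangle_{{\cal H}}=\E(Y_j(t)Y_k(t))=\delta_{jk}$, so the kernels $\psi_{t,1},\dots,\psi_{t,D}$ of \eqref{e:nucleos} form an orthonormal system in ${\cal H}$; they are Hermitian since $f(-\lambda)=f(\lambda)$, $\nu(-\lambda)=\overline{\nu(\lambda)}$ and $\Lambda^{-1}$ is real. Hence, for ${\bf n}=(n_1,\dots,n_D)\in{\cal J}_q$, the defining product formula recalled above applies and gives
\[
H_{\otimes_{\bf n}}(\rY(t))=\prod_{j=1}^{D}H_{n_j}\big(I^{\hB}_1(\psi_{t,j})\big)
=I^{\hB}_q\big(\psi_{t,1}^{\otimes n_1}\otimes\cdots\otimes\psi_{t,D}^{\otimes n_D}\big).
\]

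Next I would integrate this pointwise identity over $t\in\cT$ and exchange $\int_\cT\,dt$ with $I^{\hB}_q$. The exchange follows from a stochastic Fubini theorem whose only hypothesis is the Bochner integrability of the ${\cal H}_q$-valued map $t\mapsto\psi_{t,1}^{\otimes n_1}\otimes\cdots\otimes\psi_{t,D}^{\otimes n_D}$ on $\cT$; this holds because $|\psi_{t,j}(\lambda)|=|(\Lambda^{-1}\nu(\lambda))_j|\sqrt{f(\lambda)}$ does not depend on $t$ (the only $t$-dependence being the unit-modulus phase $e^{it\cdot\lambda}$), so $\|\psi_{t,1}^{\otimes n_1}\otimes\cdots\otimes\psi_{t,D}^{\otimes n_D}\|_{{\cal H}_q}=\prod_{j=1}^{D}\|\psi_{t,j}\|_{{\cal H}}^{n_j}=1$ is constant in $t$ while $\cT$ has finite Lebesgue measure $(2T)^d$. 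Therefore $\cH_{\bf n}(\cT)=I^{\hB}_q\big(\int_\cT\psi_{t,1}^{\otimes n_1}\otimes\cdots\otimes\psi_{t,D}^{\otimes n_D}\,dt\big)$, and since ${\cal J}_q$ is finite ($D<\infty$), linearity of $I^{\hB}_q$ gives
\[
s^{\balpha}_{q,T}=\frac{1}{(2T)^{d/2}}\sum_{{\bf n}\in{\cal J}_q}a_{\balpha}({\bf n})\,\cH_{\bf n}(\cT)
=I^{\hB}_q\Big(\frac{1}{(2T)^{d/2}}\sum_{{\bf n}\in{\cal J}_q}a_{\balpha}({\bf n})\int_\cT\psi_{t,1}^{\otimes n_1}\otimes\cdots\otimes\psi_{t,D}^{\otimes n_D}\,dt\Big)=I^{\hB}_q\big(g^{\balpha}_{q,T}\big),
\]
which is the assertion. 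The same norm computation also shows $\|g^{\balpha}_{q,T}\|_{{\cal H}_q}\le(2T)^{d/2}\sum_{{\bf n}\in{\cal J}_q}|a_{\balpha}({\bf n})|<\infty$, so $g^{\balpha}_{q,T}\in{\cal H}_q$ and the right-hand side is well defined; the kernel need not be symmetric, but $I^{\hB}_q$ symmetrizes it automatically.

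The one delicate step is the interchange of $\int_\cT\,dt$ with the Wiener--It\^o integral: I expect this to be the part requiring genuine care, carried out by approximating $\int_\cT\,dt$ with Riemann sums and controlling the $L^2(\Omega)$-error through the isometry \eqref{e:isometria} and the uniform kernel-norm bound above (alternatively, one may invoke a standard stochastic Fubini statement for multiple integrals and simply verify its integrability hypothesis, which is exactly the displayed bound). The product formula and the finiteness of ${\cal J}_q$ are routine, and it is precisely the spectral representation \eqref{e:nucleos} of $\rY(t)$ that makes the scheme go through.
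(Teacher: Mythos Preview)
Your argument is correct and follows exactly the route the paper has in mind. In fact, the paper does not write out a proof of Proposition~\ref{p:hexp2} at all: it states the result as an immediate consequence of the definition of $I^{\hB}_q$ on elementary tensors together with the spectral representation \eqref{e:nucleos} of $\rY(t)$. Your write-up supplies the two details the paper leaves implicit, namely the verification that $\{\psi_{t,j}\}_{j\le D}$ is orthonormal in ${\cal H}$ for each fixed $t$ (so the product formula applies) and the justification of the exchange $\int_\cT dt\leftrightarrow I^{\hB}_q$ via a stochastic Fubini argument with the trivial norm bound $\|\psi_{t,1}^{\otimes n_1}\otimes\cdots\otimes\psi_{t,D}^{\otimes n_D}\|_{{\cal H}_q}=1$.
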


Finally, to gain symmetry in the expansion, let us reindex it in the following way.
To each ${\bf n}\in{\cal J}_q$, associate the set of indexes $ {\cal I}_{{\bf n}} :=\{ {\bf m}\in[D]^q: \sum^q_{j=1}\indicator_{\{i\}} (m_j)=n_i, \forall i\leq D\}$ and set 
\begin{equation} \label{coeff:b}
	b_{\balpha}({\bf m}):=\frac{a_{\balpha}({\bf n})}{\# {\cal I}_{{\bf n}}}.
\end{equation}
Note that the family $\{{\cal I}_{{\bf n}}:{\bf n}\in{\cal J}_q\}$ forms a partition of $[D]^q$.
The idea is to replace $\widehat{\psi}^{\otimes n_1}_{t,1}\otimes\cdots\otimes \widehat{\psi}^{\otimes n_D}_{t,D}$ by the equivalent but more symmetric expression $\widehat{\psi}_{t,m_1}\otimes\cdots\otimes \widehat{\psi}_{t,m_D}$ where $n_j$ of the $m_i$'s equal $j$.
Actually, each term in $\bf n$ is replaced by the sum of terms in ${\bf m}\in{\cal I}_{\bf n}$.
For instance, $\widehat{\psi}_{t,1}\otimes \widehat{\psi}^{\otimes 2}_{t,2}$ is replaced by $\widehat{\psi}_{t,1}\otimes \widehat{\psi}_{t,2}\otimes \widehat{\psi}_{t,2} + \widehat{\psi}_{t,2}\otimes \widehat{\psi}_{t,1}\otimes \widehat{\psi}_{t,2} +\widehat{\psi}_{t,2}\otimes \widehat{\psi}_{t,2}\otimes \widehat{\psi}_{t,1}$.
Still, $\widehat{\psi}^{\otimes 3}_{t,1}\otimes \widehat{\psi}^{\otimes 2}_{t,2}$ is replaced by the sum of ten terms which correspond to the ways of allocating three $\widehat{\psi}_{t,1}$ among five factors.
\smallskip

Now, we can state the final version of the $q$-th chaotic component of $\cC^{\balpha}(u,\cT)$.
\begin{proposition} \label{p:hexp3}
Under the hypotheses of Proposition \ref{p:hexp2} and with the notation above, for ${\bf m}=(m_1,\cdots,m_q)$, we have
\begin{equation*}
Sym(g^{\balpha}_{q}(u,\cT)) = \frac{1}{(2T)^{d/2}} \sum_{{\bf{m}}\in [D]^q} b_{\balpha}({\bf m}) \int_{\cT} \widehat{\psi}_{t,m_1}\otimes\cdots\otimes \widehat{\psi}_{t,m_q}\dd t.
\end{equation*}
\end{proposition}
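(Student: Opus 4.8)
The statement is a purely algebraic reindexing of the kernel appearing in Proposition \ref{p:hexp2}, so the plan is to track what happens to the multiple Wiener--It\^o integral when the product of repeated copies of the $\psi_{t,j}$'s is rewritten as a general tensor product indexed by $\mathbf m\in[D]^q$. First I would recall from Proposition \ref{p:hexp2} that
\[
 g^{\balpha}_{q,T} = \frac{1}{(2T)^{d/2}} \sum_{{\bf n}\in {\cal J}_q} a_{\balpha}({\bf n}) \int_{\cT} \psi^{\otimes n_1}_{t,1}\otimes\dots\otimes \psi^{\otimes n_D}_{t,D}\,dt,
\]
and that for each fixed ${\bf n}\in{\cal J}_q$ and each fixed $t$, every $\mathbf m\in{\cal I}_{\bf n}$ gives, up to a permutation of the $q$ tensor slots, the same kernel $\psi_{t,m_1}\otimes\dots\otimes\psi_{t,m_q}$. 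The number of such $\mathbf m$ is $\#{\cal I}_{\bf n}=q!/({\bf n}!)=\binom{q}{n_1,\dots,n_D}$, which is why one divides by $\#{\cal I}_{\bf n}$ when passing from $a_{\balpha}$ to $b_{\balpha}$.

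The key step is to show that inside $I^B_q$ the two expressions agree. I would argue as follows. Fix $q$ and $t$. For a fixed ${\bf n}\in{\cal J}_q$, all the kernels $\{\psi_{t,m_1}\otimes\dots\otimes\psi_{t,m_q}:\mathbf m\in{\cal I}_{\bf n}\}$ have the same symmetrization, since symmetrization is invariant under permuting tensor slots; call it $\sigma_{t,{\bf n}}$. On one hand, $\psi^{\otimes n_1}_{t,1}\otimes\dots\otimes\psi^{\otimes n_D}_{t,D}$ is one particular element of this family (the one where the slots are grouped), so its symmetrization is also $\sigma_{t,{\bf n}}$. On the other hand,
\[
 \sum_{\mathbf m\in{\cal I}_{\bf n}} b_{\balpha}({\bf m})\,\psi_{t,m_1}\otimes\dots\otimes\psi_{t,m_q}
 = \frac{a_{\balpha}({\bf n})}{\#{\cal I}_{\bf n}}\sum_{\mathbf m\in{\cal I}_{\bf n}} \psi_{t,m_1}\otimes\dots\otimes\psi_{t,m_q},
\]
whose symmetrization is $\frac{a_{\balpha}({\bf n})}{\#{\cal I}_{\bf n}}\cdot\#{\cal I}_{\bf n}\cdot\sigma_{t,{\bf n}} = a_{\balpha}({\bf n})\,\sigma_{t,{\bf n}}$, which matches the symmetrization of $a_{\balpha}({\bf n})\,\psi^{\otimes n_1}_{t,1}\otimes\dots\otimes\psi^{\otimes n_D}_{t,D}$. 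Summing over ${\bf n}\in{\cal J}_q$ and using that $\{{\cal I}_{\bf n}:{\bf n}\in{\cal J}_q\}$ partitions $[D]^q$, the two kernels have the same symmetrization. Since $I^B_q(\psi)=I^B_q(Sym(\psi))$ by the identity stated before \eqref{e:isometria}, we may interchange $\int_\cT$ with $I^B_q$ (justified, as in Proposition \ref{p:hexp2}, by the $L^2$ continuity/Fubini-type argument for Wiener integrals) and conclude that the two expressions for $g^{\balpha}_{q,T}$ define the same element of ${\cal H}_q$ modulo symmetrization, hence the same chaotic random variable $s^{\balpha}_{q,T}$.

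The only mild subtlety — and the step I would be most careful about — is bookkeeping: verifying that $\#{\cal I}_{\bf n}={\bf n}!^{-1}q!$ and that the grouped tensor $\psi^{\otimes n_1}_{t,1}\otimes\dots\otimes\psi^{\otimes n_D}_{t,D}$ really does lie in ${\cal I}_{\bf n}$ in the sense that it equals $\psi_{t,m_1}\otimes\dots\otimes\psi_{t,m_q}$ for the specific $\mathbf m$ listing $n_1$ ones, then $n_2$ twos, etc.; both are immediate from the definition of ${\cal I}_{\bf n}$. No genuine analytic obstacle arises here since Propositions \ref{p:hexp} and \ref{p:hexp2} already carry the $L^2$-convergence and the spectral representation; this proposition is essentially a change of notation designed to symmetrize the kernel, and the proof is a one-paragraph verification that symmetrizations coincide.
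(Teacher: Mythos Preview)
Your proposal is correct and matches the paper's approach: the paper itself does not write out a formal proof of this proposition, treating it as a notational reindexing (it merely says the grouped tensor is ``equivalent'' to the symmetric one and that $\{{\cal I}_{\bf n}\}$ partitions $[D]^q$). Your argument makes explicit the one point that actually needs checking---that the two kernels agree after symmetrization and hence give the same $I^{\hat B}_q$---and your bookkeeping on $\#{\cal I}_{\bf n}$ and the partition is exactly what is required.
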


\section{Proof of Theorem \ref{t:tcl}} \label{s:proof}
The proof of Theorem \ref{t:tcl} is divided into several steps.
The finiteness of the limit variance (part 1.) and the CLT (part 2.) follow the same lines as the corresponding proofs in \cite{EL}, so they are presented succinctly.
The remaining parts are treated more carefully.

\subsection{Limit variance}
Here we deal with three different issues: the finiteness of the limit variance in subsection \ref{ss:fv}, its positivity for $u\in\R$ and $\balpha\neq0$ in the isotropic case in subsection \ref{ss:pv} and the singular case $u=-\infty$ in subsection \ref{ss:su}.
There is also a subsection, \ref{ss:ee}, dedicated to obtain an explicit expression of the limit variance which has some interest in its own right.
\subsubsection{Finiteness of the limit variance} \label{ss:fv}
By the orthogonality in $q$, we have
\begin{equation} \label{e:var}
 \Var \big( \cC^{\balpha} (u,\cT) \big) = \sum^{\infty}_{q=1} \sum_{{\bf m},{\bf n}\in{\cal J}_q} a_{\balpha}({\bf{m}}) a_{\balpha}({\bf{n}}) \frac{\E\big[ \cH_{{\bf{m}}} (\cT) \cH_{{\bf{n}}} (\cT)\big]}{(2T)^d}.
\end{equation}

For each fixed $q$, the convergence of the corresponding term follows in the same manner as \citep[Eq. (8)-(11)]{EL}.
Indeed, for fixed $q$, the inner sum has a finite number of terms, thus, it converges if the terms $\frac{1}{(2T)^d}\E\big[ \cH_{{\bf{m}}} (\cT) \cH_{{\bf{n}}} (\cT)\big]$ do.
We have, for fixed $\bf{m},\bf{n}\in{\cal J}_q$
\begin{align*}
\frac{\E\big[ \cH_{{\bf{m}}} (\cT) \cH_{{\bf{n}}} (\cT)\big]}{(2T)^d} 
&= \frac1{(2T)^d} \iint_{\cT^2} \E[H_{\bar{\otimes}_m}(\bY (s)) H_{\bar{\otimes}_n}(\bY (t))] \dd s\dd t\\
&\mathop{\longrightarrow}\limits_{T\to\infty} \int_{\R^d} \E[H_{\bar{\otimes}_m}(\bY (0)) H_{\bar{\otimes}_n}(\bY (t))] \dd t,
\end{align*}
because of stationarity.
Now, Mehler's formula \citep[Eq.(8)]{EL} gives 
\begin{equation*}
 \E\big[H_{\bar{\otimes}_m}(\bY (0)) H_{\bar{\otimes}_n}(\bY (t))\big] \leq \ct \Psi_Y(t)^q, 
\end{equation*}
where $\Psi_{Y}(t)$ is defined as $\Psi(t)$ replacing $r(t)$ by $\Gamma(t) :=\E(Y(0)Y(t))$ in \eqref{e:psi}.
Recall that $\rX(t)=\Lambda\bY(t)$ for $t\in\R^d$ by \eqref{e:XeY}, thus, there exists $K>0$ such that $\Psi_{Y}(t)\leq K\Psi(t)$.
Hence, $\E[H_{\bar{\otimes}_m}(\bY (0)) H_{\bar{\otimes}_n}(\bY (t))] \leq \ct \Psi(t)^q$, and provided that $\Psi\in L^1(\R^d)$ by (A3), we deduce that there exists $V_q^{\balpha}(u)\ge 0$ such that
\begin{equation} \label{d:Vqalpha}
V_q^{\balpha}(u,\cT) := \sum_{{\bf m},{\bf n}\in{\cal J}_q} a_{\balpha}({\bf{m}}) a_{\balpha}({\bf{n}}) \frac{\E\big[ \cH_{{\bf{m}}} (\cT) \cH_{{\bf{n}}} (\cT)\big]}{(2T)^d} \mathop{\to}\limits_{T\to\infty} V_q^{\balpha}(u).
\end{equation}

We now proceed to bound the variance of the tail of the expansion:
\begin{equation} \label{cota:cola}
 \sup_{T} \sum_{q>Q} V_q^{\balpha}(u,\cT) = \sup_{T} \Var \Big( \pi_Q(\cC^{\balpha}(u,\cT))\Big).
\end{equation}
Here, $\pi_Q$ stands for the orthogonal projection onto $\bigoplus_{q>Q}{\cal K}_q$. Note that this fact implies the convergence in \eqref{e:var}.
Set $I_{T}:=[-T,T)^d\cap\Z^d$ and $s[0,1)^d:=s+[0,1)^d$. Thus, 
\begin{equation*}
\cC^{\balpha}(u,\cT) = \frac{1}{(2T)^{d/2}}\sum_{s\in I_{T}} \cC^{\balpha} (u,s[0,1)^d).
\end{equation*}
Using the stationarity of $\rY$, we get
\begin{align*}
V_{T,Q} &:=\Var \big( \pi_Q(\cC^{\balpha}(u,\cT))\big) \\
&= \sum_{s\in I_{T}} \frac{ \# I_T\cap(I_T-s)}{(2T)^d}\ \E\Big(\pi_Q(\cC^{\balpha}(u,[0,1)^d))\cdot \pi_Q(\cC^{\balpha}(u,s[0,1)^d))\Big).
\end{align*}
Now, assumption (A3) gives us the right to choose $a>0$ such that $K\Psi(s)\leq \rho<1$ for $s:\|s\|_{\infty}\geq a$.
Thus, we split $V_{T,Q} = V^1_{T,Q}+ V^2_{T,Q}$ where $V^1_{T,Q},V^2_{T,Q}$ stand respectively for the sums over $s:\|s\|_{\infty}\leq a+1$ and over $s : \|s\|_{\infty} > a+1$.
Consider $V^1_{T,Q}$ first. Clearly there are less than $(2a+2)^d$ terms in the sum and for each one of them, assuming w.l.o.g.
that $2T > a + 1$, we have $\# I_T\cap(I_T-s)\leq (2T)^d$.
Thus, from the Cauchy-Schwarz inequality and stationarity, we have
\[
 |V^1_{T,Q}|
\leq  (2a+2)^d \ \E \Big(\pi_Q(\cC^{\balpha}(u,[0,1)^d))^2 \Big),
\]
which tends to $0$ as $Q\to\infty$ uniformly w.r.t. $T$.
Now, we move to $V^2_{T,Q}$ to find that
\begin{multline*}
\E\Big(\pi_Q(\cC^{\balpha}(u,[0,1)^d))\cdot \pi_Q(\cC^{\balpha}(u,s[0,1)^d))\Big)\\
= \sum_{q>Q} \iint_{([0,1)^d)^2} \E\big[ h_q(\rY(t)) h_q(\rY(s+u))\big]\dd t\dd u,
\end{multline*}
where $h_q(\cdot) = \sum_{{\bf n}\in{\cal J}_q}a_{\balpha}({\bf n})H_{\otimes_{\bf n}}(\rY(\cdot))$.
Arcones' inequality, see Lemma \ref{l:arcones}, implies that
\[
\E\big[ h_q(\rY(t)) h_q(\rY(s+u))\big] \leq K^d\Psi^d(s+u-t) \|h_q\|^2_2.
\]
Since $\|s+u-t\|_\infty\geq a$, we have $K\Psi(s+u-t)\leq\rho$.
Besides, \citep[Lem.4.3]{aadl0} implies that $\|h_q\|^2_2\leq \|\sum^d_{k=0}\alpha_k \tilde{f}_k\|^2_2=\ct<\infty$. Thus, 
\begin{equation*}
|V^2_{T,Q}| \leq \ct \sum_{s\in I_T,\|s\|_{\infty}>a} \sum_{q>Q} \rho^{q-1}\iint_{([0,1)^d)^2} \Psi(s+u-t) \dd tdu \leq \ct \rho^{Q},
\end{equation*}
which tends to $0$ as $Q\to\infty$, uniformly w.r.t.
$T$. This proves that the supremum in \eqref{cota:cola} tends to zero and, thus, that the limit variance $V^{\balpha}(u)$ is finite.
\subsubsection{Explicit expression} \label{ss:ee}
We compute the limit variance $V^{\balpha}_{q}(u)$ of the $q$-th chaotic component $\cC^{\balpha}_{q}(u,\cT) = I^{\hB}_q(g^{\balpha}_{q}(u,\cT))$ as $T\to\infty$, see Proposition \ref{p:hexp3}.
By the isometric property of the MWI \eqref{e:isometria}, we get
\begin{equation*}
V^{\balpha}_{q}(u,\cT) = \E\big( I^{\hB}_q(g^{\balpha}_{q}(u,\cT))^2 \big) = q! \|Sym(g^{\balpha}_{q}(u,\cT))\|^2_{{\cal H}_q}.
\end{equation*}
Recall that $\Gamma_{i,j}(t)=\E[Y_i(t)Y_j(0)]$. Hence, we have
\begin{align*}
V^{\balpha}_{q}(u,\cT) &= \frac{q!}{(2T)^d} \sum_{{\bf m},{\bf m'}\in[D]^q} b_{\balpha}({\bf m}) b_{\balpha}({\bf m}') \iint_{\cT\times\cT} \prod^q_{i=1} \<\widehat{\psi}_{t,m_{i}}, \widehat{\psi}_{s,m'_{i}}\>_{{\cal H}} \dd s\dd t\\
&= \frac{q!}{(2T)^d} \sum_{{\bf m},{\bf m}'\in[D]^q} b_{\balpha}({\bf m}) b_{\balpha}({\bf m}') \iint_{\cT\times\cT} \prod^q_{i=1} \Gamma_{m_{i},m'_{i}} (t-s) \dd s\dd t\\
&\mathop{\to}\limits_{T\to\infty} q!
\sum_{{\bf m},{\bf m}'\in[D]^q} b_{\balpha}({\bf m}) b_{\balpha}({\bf m}') \int_{\R^d} \prod^q_{i=1} \Gamma_{m_{i},m'_{i}}  (t) \dd t =V^{\balpha}_{q}(u).
\end{align*} 
In the second line, we used the isometry \eqref{e:isometria1} of the simple stochastic integrals \eqref{e:Ysometria} and we replaced $\pi^{-1}$ by $\pi$ to simplify the notation.
Now, from \eqref{e:Ysometria}, \eqref{e:nucleos-a} and \eqref{e:nucleos-b}, we have that $\Gamma_{i,j}(t)={\cal F}(\psi_i\psi_jf) (t)$ with $\psi_j(\cdot)=(\Lambda^{-1}\omega(\cdot))_j$.
As the product of covariances corresponds, via the Fourier transform, to the convolution of the spectral densities, we have
\begin{equation*}
 \prod^q_{i=1} \Gamma_{m_{i},m'_{i}}  (t) = {\cal F}\Big( \mathop{\ast}^q_{i=1} \psi_{m_{i}}\psi_{m'_{i}}f \Big) (t).
\end{equation*}
Thus, since the integral on the whole space of a function coincides with its Fourier transform evaluated at zero, we deduce that
\begin{align*}
V^{\balpha}_{q}(u) &= q!\sum_{{\bf m},{\bf m}'\in[D]^q} b_{\balpha}({\bf m}) b_{\balpha}({\bf m}') \int_{\R^d} {\cal F}\Big( \mathop{\ast}^q_{i=1} \psi_{m_{i}} \psi_{m'_{i}}f \Big) (t) \dd t\\
&= q!\sum_{{\bf m},{\bf m}'\in[D]^q} b_{\balpha}({\bf m}) b_{\balpha}({\bf m}') {\cal F}\Big({\cal F}\Big( \mathop{\ast}^q_{i=1} \psi_{m_{i}} \psi_{m'_{i}}f \Big)\Big) (0).
\end{align*}
Now, using the Fourier inversion formula, we get 
\begin{equation*}
V^{\balpha}_{q}(u) = q!(2\pi)^d\sum_{{\bf m},{\bf m}'\in[D]^q} b_{\balpha}({\bf m}) b_{\balpha}({\bf m}') \mathop{\ast}^q_{i=1} \psi_{m_{i}} \psi_{m'_{i}}f (0).
\end{equation*}
The convolution above can be written as
$ \int_{{\cal W}} \prod^q_{i=1} (\psi_{m_{i}} \psi_{m'_{i}} f)(\nu_i) \dd\sigma(\nu_1,\cdots \nu_{q})$,
with ${\cal W}:=\{(\nu_1,\cdots,\nu_q)\in\R^{dq}:\nu_1+\cdots+\nu_q=0\}$ and $\dd\sigma(\cdot)$ the Lebesgue measure on ${\cal W}$.
Finally, we arrange the terms to
\begin{subequations}
\begin{equation} \label{e:Vqfinal}
V^{\balpha}_{q}(u) = q!(2\pi)^d \int_{{\cal W}} P_q(\nu_1,\cdots,\nu_q)^2 {\mathbf f}_q(\nu_1,\cdots,\nu_q) \dd\sigma(\nu_1 \cdots \nu_{q}),
\end{equation}
with
\begin{align}
P_q(\nu_1, \cdots, \nu_{q}) & = \sum_{{\bf m}\in[D]^q} b_{\balpha}({\bf m}) \prod^q_{i=1} \psi_{m_{i}} (\nu_i), \label{e:forme1}\\
{\mathbf f}_q(\nu_1,\cdots,\nu_q) &= \prod^q_{i=1} f (\nu_i).
\label{e:Fq}
\end{align}
\end{subequations}
See the analogous expression in \citep[Cor. 1.3]{Gass-esp}. Needless to say that the expression for $P_q$ is heavy combinatorially spealing.
\subsubsection{Positivity of the limit variance \texorpdfstring{$u\in\R$}{}} \label{ss:pv}
In this subsection we assume that the random field $\X$ satisfies Assumptions (A1) to (A4).
The last Assumption, (A4), ensures the positivity of ${\mathbf f}_q$ close to the origin and allows to manipulate some Hermite coefficients thanks to the simplifications implied by the isotropy condition.
Our aim is to prove that for $u\in\R$, the only way to get a degenerate limit law is that the coefficients $\alpha_0,\cdots,\alpha_d$ vanish.
Hence, assume that the limit variance $V^{\balpha}(u)$ vanishes, that is, assume that $V^{\balpha}_{q}(u) =0$ for every $q\geq1$, see \eqref{d:Vqalpha}.
This fact provides us an infinite number of conditions on the coefficients $\alpha_k:k=0,1,\cdots,d$ but, even in this way, it is not direct to conclude that they vanish, see the related discussion in \cite[S.1.1.2]{Gass-esp}.
Our way out is to focus on a very particular and convenient class of Hermite coefficients which we are able to isolate and manipulate in order to get some conditions related to the eigenvalues of a GOE random matrix.
\medskip

The next lemma, which is proved in   \ref{a:pqcero}, provides us the first step by reducing the problem to one kind of Hermite coefficients.
\begin{lemma} \label{l:pqcero}
Assume (A1) to (A4). Assume further that $V^{\balpha}(u)=0$. Hence, for each $q\ge 1$, we have that
\begin{equation*}
 \sum_{k=0}^d \alpha_kb_{k}(D\u_q) = 0,
\end{equation*}
where $\u_q$ stands for the "all ones" vector in $[D]^q$.
\end{lemma}
So we are led to study the Hermite coefficients $b_k(D\u_q):k=0,1,\cdots,d$; $q=1,\cdots$.
From \eqref{coeff:b} and from \eqref{coeff:a} we get $b_{k}(D\u_q) =  a_k(qe^D_D)=d(0)c(\tilde{f}_k,qe^{D-d}_{D-d}):k=0,\cdots,d$; $q=1,\cdots$.
Hence, omitting $t$, since $\tilde{f}_k(Y'',Y)=f_k(X'',X)$, we have for each $q\geq 1$
\begin{equation*} 
 0=q!c(\tilde{f}_k,qe^{D-d}_{D-d}) = \E\left[\sum_{k=0}^d(-1)^k\alpha_k \det(X'')\indicator_{A_k}(X'')\indicator_{[u,\infty)}(X)H_q(Y_D)\right].
\end{equation*}
This expression has the drawback of involving both the original random vector $\rX$ and its standardized version $\rY$.
Next lemma, which proof can be found in   \ref{a:somel}, transforms this linear system of equations into a more tractable one.
Set $c_d=\sqrt{2\lambda_4/3}$ and define, for $k=0,\cdots,d$, the functions
\begin{equation} \label{d:Fk}
 F_k(z) := c_d^{-d}\ \E\big[(-1)^k\det(X'')\indicator_{{\mathcal A}_k} (X'')\mid {\rm tr}(X'')= -d z\big], z\in\R.
\end{equation}
\begin{lemma} \label{l:traza}
On the conditions of Lemma \ref{l:pqcero}, for $z\in\R$, $ \sum^d_{k=0}\alpha_k F_k(z) = 0$.
\end{lemma}
Hence, it suffices to prove that the functions $F_k:k=0,\cdots,d$ are linearly independent. We can further simplify the expression for $F_k:k=0,\cdots,d$.
The conditional distribution of $X''$ given that ${\rm tr}(X'') = -d z$ equals the distribution of $c_d\big(G_d - \frac{1}{d}{\rm tr}(G_d)\I_d - z\I_d\big)$, where $G_d$ is a $d\times d$ GOE random matrix, see   \ref{a:pqcero}.
Besides, since each function in the expectation in \eqref{d:Fk} depends on $X''$ only through its eigenvalues, we can rewrite $F_k$ in terms of the (well known) distribution of the eigenvalues of a GOE random matrix.
\begin{equation*}
 F_k(z) = \E\Bigg[(-1)^k\prod^d_{i=1}(\nu_i - \bar{\nu}-z) \cdot \indicator_{\nu_1-\bar{\nu}<\cdots<\nu_k-\bar{\nu}<z<\nu_{k+1}-\bar{\nu}<\cdots<\nu_d-\bar{\nu}}\Bigg],
\end{equation*}
where $\bar{\nu}=\sum^d_{i=1}\nu_i/d$. Furthermore, to simplify the computations, we define
\begin{equation*}
 G_k(z) := \E \Big[(-1)^k\prod^d_{i=1}(\nu_i-z) \cdot \indicator_{\nu_1<\cdots<\nu_k<z<\nu_{k+1}<\cdots<\nu_d}\Big].
\end{equation*}
We can reduce the problem to the linear independence of the $G_k$'s.
Indeed, for a GOE random matrix, the centered eigenvalues $\mu_i:\nu_i-\bar{\nu}:i=1,\cdots,d$ are independent from their mean $\bar{\nu}$.
Thus, conditioning on the mean, we get $G_k(z) = \int^\infty_{-\infty} F_k(\ell+z) \phi_{1/d}(\ell) \dd\ell$, with $\phi_{1/d}$ standing for the density of a centered normal random variable with variance $1/d$.
Clearly, any vanishing non trivial combination of the $F_k$'s gives such a combination for the $G_k$'s.
Hence, it suffices to state the linear independence of the $G_k$'s.
This is a direct consequence of the next lemma, which proof can be found in   \ref{a:somel}.
\begin{lemma} \label{l:LMethod}
 With the above notation. As $z\to\infty$, there exist constants $D^d_k$ and positive constants $C^d_k$ such that
\begin{equation*}
 G_k(z) \simeq C^d_k z^{D^d_k} \exp{\left\{-\frac{d-k}{2}z^2\right\}}.
\end{equation*}
In particular, for $d>1$, the functions $G_k:k=0,\cdots, d$ are linearly independent.
\end{lemma}
This concludes the proof of the positivity of the limit variance $V^{\balpha}(u)$ for $u\in\R$ and $\balpha\neq0$.
\subsubsection{Singularity in the case \texorpdfstring{$u=-\infty$}{}} \label{ss:su}
Let $\chi (\cT)$ be the Euler characteristic of $\cT$ which takes, of course, the value 1. To specify the relation of $\chi (\cT)$ with the modified Euler characteristic $\Phi(\cT)$, we study the boundary of $\cT$.
For $L \subset [d]$ set $\ell=\#L$ and ${\bar{L}}=[d]\setminus L$. A face of dimension $ d-\ell$ of $\cT$ is indexed by
\begin{itemize}
 \item the set $L$ of coordinates that are fixed to $-T$ or $T$,
 \item the set $ \{i_1, \ldots,i_\ell\}$ indicating for each fixed coordinate if it is fixed to $-T $ ($i_j=-1$) or to $T$ ($i_j=1$).
\end{itemize}
Such a face is denoted by $\cT_{L, i_1, \ldots, i_\ell}$ with the special case $\cT_{\emptyset } =\cT $.
Define for $k=0,\ldots,d-\ell$,
\begin{multline*}
  \mu_k(L,i_1,\ldots, i_\ell)  \\ := 
   \#\Big\{v\in \cT_{L,i_1,\ldots, i_\ell}  : X'_j(v) =0, \mbox{ for }j \in {\bar{L}};
i (X''_{ij}(v), i,j \in {\bar{L}}) =d-\ell-k;  \\\mbox{ the $\ell$ outward derivatives are positive}\Big\}.
\end{multline*}
We have \cite[Sec.
9.4]{adlertaylor}
\begin{equation*} 
\chi(\cT ) = \sum_{ L\subset [d] } \sum_{ \{i_1, \ldots,i_\ell\} \in \{-1,1\}^\ell} \sum^{d-\ell}_{k=0}(-1)^k\mu_k(L,i_1,\ldots, i_\ell).
\end{equation*}
In the sum above the term corresponding to $L=\emptyset$ is the modified Euler  characteristic $\Phi(\cT) $:
\[
\Phi(\cT) := \sum_{k=0}^d (-1)^{k}\crit^{d-k} (-\infty,\cT).
\]
Theorem \ref{t:tcl}, applied to a face $\cT_{L, i_1, \ldots, i_\ell}$ with $L\neq \emptyset$, implies that $$ \Var(\mu_k(L,i_1,\ldots, i_\ell))\leq \ct T^{d-\ell}.$$ This proves that the contribution of this term to the variance of $ T^{-\frac{d}{2}} \chi(\cT )$ is negligible.
This gives that
\[
  \Var \Big(T^{-\frac{d}{2}}\big( \Phi(\cT)- \chi(\cT)\big) \Big) \to 0
\]
Since $\chi(\cT) =1$,  we get $\Var \big(T^{-\frac{d}{2}} \Phi(\cT) \big)\to 0$ proving the singularity.
\subsection{Central limit theorem} 
From \eqref{cota:cola}, we have $\lim_{Q\to\infty} \sup_{T} \pi_Q \big( \cC^{\balpha}(u,\cT)\big) = 0$ in the $L^2$-sense.
Hence, it suffices to fix $Q$ and to establish the CLT for the partial sums $\sum^Q_{q=1}\cC^{\balpha}_{q}(u,\cT)$ which, following Theorem \ref{t:jvsm}, is equivalent to state the CLT for each fixed chaotic component $\cC^{\balpha}_{q}(u,\cT) = I^{\widehat B}_q(g^{\balpha}_{q}(u,\cT))$, $q\leq Q$.
Recall that
\begin{equation*}
g^{\balpha}_{q}(u,\cT) = \frac{1}{(2T)^{d/2}} \sum_{{\bf m}\in[D]^q} b_{\balpha}({\bf m}) \int_{\cT} \widehat{\psi}_{t,m_1}\otimes\cdots\otimes \widehat{\psi}_{t,m_q} \dd t.
\end{equation*}
and that $\Gamma_{i,j} (t-s)=\E(Y_i(t)Y_j(s)) =\<\widehat{\psi}_{t,i}, \widehat{\psi}_{s,j}\>_{{\cal H}}$, see Proposition \ref{p:hexp3}.
According to Theorem \ref{t:jvsm}, it suffices to prove that the norm of the contractions $g^{\balpha}_{q}(u,\cT)\bar{\otimes}_{r} g^{\balpha}_{q}(u,\cT)$, $r\in[q-1]$ tend to zero as $T\to\infty$, see \cite[p.17]{EL} for an analogous computation.
Since $q$ is fixed, by subadditivity, the coefficients play no role in this convergence.
Thus, let us concentrate on the norm of the contraction of two normalized integrals as those in $g^{\balpha}_{q}(u,\cT)$.
\begin{multline*}
 \Big[\frac{1}{(2T)^{d/2}}\int_{\cT} \widehat{\psi}_{t,m_1}\otimes\cdots\otimes \widehat{\psi}_{t,m_q} \dd t\Big] \bar{\otimes}_r
 \Big[\frac{1}{(2T)^{d/2}}\int_{\cT} \widehat{\psi}_{s,n_1}\otimes\cdots\otimes \widehat{\psi}_{s,n_q} \dd s\Big]\\
 =\frac{1}{(2T)^d}\int_{(\cT)^2} \prod^{r}_{k=1}\Gamma_{m_kn_k}(t-s)\cdot \bigotimes^{q}_{k=r+1}\widehat{\psi}_{t,m_k}\otimes\widehat{\psi}_{s,n_k}\dd s \dd t.
\end{multline*} 
Now, taking the square of the norm we get
\begin{multline*}
\frac{1}{(2T)^{2d}}\int_{(\cT)^4} \prod^{r}_{k=1}\Gamma_{m_kn_k}(t-s) \prod^{r}_{k=1}\Gamma_{m_kn_k}(t'-s') \prod^{q}_{k=r+1}\Gamma_{m_km_k}(t'-t)\\
\cdot \prod^{q}_{k=r+1}\Gamma_{n_kn_k}(s-s')\ \dd s\dd t\dd s'\dd t'\\
 \leq \frac{1}{(2T)^{2d}}\int_{(\cT)^4} \Psi(t-s)^r  \Psi(t'-s')^r \Psi(t'-t)^{q-r} \Psi(s-s')^{q-r}\ \dd s\dd t\dd s'\dd t'\\
 \leq \frac{\ct}{(2T)^{d}}\int_{(\R^d)^3} \Psi(v_1)^r  \Psi(v_2)^r  \Psi(v_3)^{q-r}\ \dd v_1\dd v_2\dd v_3.
\end{multline*}
To get the last bound, we used an isometric change of variables and the fact that $\Psi$ is bounded.
Afterwards, we enlarged the domain $\cT$ to the whole space $\R^d$.
Assumption {\rm (A3)} easily shows that the latter expression is bounded by $\ct/(2T)^d$. The CLT follows.


\begin{appendix}\label{app}

\section{Wick's formula}\label{appA}
Recall Wick's formula, for $X_1,X_2,X_3,X_4$ centered and jointly Gaussian,
\begin{align*}
\E[X_1X_2X_3X_4] &= \E[X_1X_2]\E[X_3X_4] + \E[X_1X_3]\E[X_2X_4] + \E[X_1X_4]\E[X_2X_3],\\
\E[H_2(X_1)X_2X_3] &= 2\E[X_1X_2]\E[X_1X_3].
\end{align*}
This is a consequence of the well known diagram formula, see \cite[L.A.1]{nico}.
\section{Arcones' inequality} \label{appB}
Let $W$ be a standard Gaussian vector on $\R^n$ and $h:\R^n\to\R$ a measurable function such that $\E[h(W)]=0$ and $\E[h^2(W)]<\infty$.
The Hermite rank of $h$ is defined as
\[
\mbox{rank}(h)=\inf\Big\{\tau:\, \exists\; \mathbf k\in\N^n\,, |\mathbf k|=\tau\,; \E[h(W) H_{\otimes\mathbf k}(W)]\neq0\Big\}.
\]
Then, we have the classical result.
\begin{lemma}[\cite{arcones}, pp. 2245] \label{l:arcones}
Let $W=(W_1,\cdots,W_n)$ and $Z=(Z_1,\cdots,Z_n)$ be two mean-zero Gaussian random vectors on $\R^n$ such that $\mathbb E[W_jW_k]=\mathbb E[Z_jZ_k]=\delta_{j,k},$ for $1\le j,k\le n$.
Set, for $1\le j,k\le n$, $r^{(j,k)}=\mathbb E[W_jZ_k]$. Let $h$ be a function on $\R^n$ with Hermite rank $\tau\geq 1$.
Define the Arcones' coefficient by
\begin{equation*}
\Psi := \max\Bigg\{\max_{1\le j\le n}\sum_{k=1}^n|r^{(j,k)}|,\max_{1\le k\le n}\sum_{j=1}^n|r^{(j,k)}|\Bigg\}.
\end{equation*}
Then,
\begin{equation*}
|\Cov(h(W),h(Z))|\le \mathbb E[h^2(W)] \Psi^{\tau}.
\end{equation*}
\end{lemma}

\section{Proof of Proposition \ref{p:prop}} \label{s:app:geman}
Let $B$ be a Borel set of $\R^d$ and let $N(v,B)$ be as in \eqref{a:1}.
By use of the triangle inequality, we see that it suffices to establish the finiteness of the second moment for a sufficient small set $B$ and that we can limit our attention to the case where $B$ is the ball $B_r$ with center $0$ and radius $r$ with $r$ sufficiently small.
Our main tool is the order-$2$ Kac-Rice formula which requires the distribution of $(X'(s),X'(t))$ to be N.D. for all $ s,t \in B_r$, $s\neq t$.
This will be proved later on in \eqref{e:density}. Then, using \cite[Th.4]{aal}, we have
\begin{multline}\label{e:krf:2}
  \E\big(N(v,B_r) \big(N(v,B_r)-1\big)\big) \\ =
  \int_{(B_r)^2} \E_{\mathcal C} \big|\det(X''(s)) \det(X''(t))\big|
p_{X'(s),X'(t)}(v,v) \dd s\dd t.
\end{multline}
Here, $\E_{\mathcal C}$ is the expectation conditional to $\mathcal C := \{ X'(s) = X'(t) =v\}$.
Because of stationarity, the integral in the r.h.s. of \eqref{e:krf:2} can be bounded by
\begin{equation}\label{e:krf:3}
 \ct   \int_{B_{2r}} \E_{\mathcal C}  \big|\det(X''(0) \det(X''(t)\big| p_{X'(0),X'(t)}(v,v) \dd t.
\end{equation}
We can now study the validity of the order-$2$ Kac-Rice formula. To this end, we study the joint density
\[
  p_{X(0),X(t)} (0,0) = \ct \big( \det \Var(X(0),X(t))\big) ^{-\frac12} .
\]
We use a sequence $t_i$, $i=1,2,\cdots$ such that $\frac{t_i}{\|t_i\|} \to \mu \in {\mathbb S}^{d-1}$. Using the fact that a determinant is invariant by adding to some row (or column) a linear combination of the others rows (or columns) \cite{belyaev1966}, using (A1) we get
\begin{multline*}
  \det \big(\Var(X(0),X(t_i))\big) = 
 \|t_i\|^{2d}\det \big(\Var(X(0), \frac{1}{\|t_i\|} \big(X(t_i)-X(0)\big)\big)\\
  \simeq \|t_i\|^{2d}\det \big(\Var(X(0),X_\mu'(0))\big).
\end{multline*}
Note that, because of the independence of $X(0)$ from $X'(0)$, the limit above is bounded uniformly in $\mu$. By an argument of compactness, this implies that, as $t\to 0$ in any manner, in the neighbourhood of $t=0$, we have
\begin{equation}\label{e:density}
   p_{X(0),X(t)} (0,0)   \leq \ct \|t\|^{-d} .
\end{equation}
This implies in turn that, for $t$ sufficiently small, the joint density is N.D. and that we can apply the order-$2$ Kac-Rice formula as soon as the considered set $B$ is sufficiently small.
Now, we can write the integral in \eqref{e:krf:3} in polar coordinates setting $t= \rho \mu$ for $\rho >0$ and $\mu \in {\mathbb S}^{d-1}$.
We perform the integral in that order
\[
  \int_{{\mathbb S}^{d-1}} \dd\sigma(\mu) \int_0 ^{2r} \rho^d\, \E_{\mathcal C} \big|\det(X''(0) \det(X''(\rho \mu )\big| p_{X'(0),X'(\rho \mu)}(v,v) \dd\rho.
\]
We see that a uniform bound (w.r.t. $\mu$) of the radial limits, as $\rho \to 0$, is sufficient to conclude.

Consider the conditional expectation
\begin{equation*} 
 \mathcal A(\rho \mu ,v) :=\E_{\mathcal C}\big|\det(X''(0))\det(X''(\rho \mu))\big|.
\end{equation*}
Note that $\rho$, $\mu$ and $v$ appear in the condition $ \mathcal C$. Using the Cauchy-Schwarz inequality and the symmetry of the roles of $0$ and $\rho \mu$, we have
\[
   \mathcal A(\rho \mu ,v) \leq \E_\mathcal C \big({\rm det}^2(X''(0))\big).
\]
Condition $\mathcal C$ can be written as $ 
\mathcal C = \{X'(0) =v, \frac{X'(\rho\mu) -X'(0)}{\rho} =0\}$. The $2d\times 2d$ variance-covariance matrix of the conditioners defining $\mathcal C$ is now
\begin{equation}\label{e:var1}
  \Var :=\left(\begin{array}{cc} \Lambda_{1}& - \rho K(\rho, \mu) \\ - \rho    K(\rho, \mu)   &  2 K(\rho, \mu) \end{array}\right),
\end{equation}
where
\begin{equation}\label{e:var2}
 2  K(\rho, \mu) = 2\rho^{-2} \big(\Lambda_1 + r''(\rho\mu)\big)  = r^{(4)}(0)_{:,:,\mu,\mu}+ o (1),
\end{equation}
being $r^{(4)}(0)_{:,:,\mu,\mu}$ the matrix whose $(i,j)$-entry is $r^{(4)}(0)[e_i,e_j,\mu,\mu]$, $i,j\in[d]$.
The covariance between $X''(0)$ and $X'(t)$ is given by the tri-variant $d^3$-tensor $r^{(3)}(t)$.
This implies that the covariance between $X''(0)$ and the conditioners takes the value
\[
\Cov := \Cov \Big(X''(0),\big( X'(0) ,  \frac{X'(\rho\mu) -X'(0)}{\rho} \big)\Big) = \Big(0_{d^3}, \frac{1}{\rho}\big(r^{(3)} (\rho\mu)   \big) \Big).
\]

We now consider another conditioning, namely with respect to $\mathcal C':=\{X'(0) = v, X''(0) \mu =0\}$.
It is trivial to see that $\E_{\mathcal C'} ({ \rm det}^2(X''(0))) =0$.
The variance-covariance matrix of the new conditioners is
\[
  \Var ':=\left(\begin{array}{cc} \Lambda_1 & 0 \\ 0  &  r^{(4)}(0)_{:,:,\mu,\mu}\end{array}\right).
\]
Because of (A5), this matrix is N.D. The covariance is now
\[
\Cov' := \Cov \big(X''(0),\big( X'(0),  X''(0) \mu \big)\big) = \Big(0_{d^3},  r^{(4)}(0)_{:,:,:,\mu}\Big),
\]
where $r^{(4)}(0)_{:,:,:,\mu}$ is the $3$-tensor with $(i,j,k)$-entry equal to $r^{(4)}(0)[e_i, e_j,e_k,\mu]$, $i,j,k\in[d]$.
The regression formulas imply that
\begin{align*}
   \E_{\mathcal C} (X''(0))  &= \Cov \  \Var^{-1} \left( \begin{array}{c} v 1_{d} \\ 0_d\end{array}\right) ,\\
    \Var_{\mathcal C} (X''(0))  &= r^{(4)} (0)- \Cov  \ \Var^{-1}  \Cov ^\top, \\
  \E_{\mathcal C'} (X''(0))  &= \Cov' (\Var')^{-1} \left( \begin{array}{c} v 1_{d} \\ 0_d\end{array}\right) , \\
    \Var_{\mathcal C'} (X''(0))  &= r^{(4)} (0)- \Cov' (\Var')^{-1}  (\Cov') ^\top,
\end{align*}
where $1_d$ is the ``all one vector'' of size $d$.
These formulas need some additional details: for the first one, for instance, let $C_{ijk}$ be the entries of $\Cov$ and $V_{k,k'}$ those of $ \Var^{-1}$, then, the $(i,j)$-entry of $\E_{\mathcal C} (X''(0))$ is $ v \sum^{2d}_{k=0}\sum^d_{k'=0} C_{ijk} V_{kk'}$.
For the second one, the $(i_1,i_2,i_3,i_4)$-entry of $\Cov\ \Var^{-1} \Cov ^\top $ is given by $\sum_{k,k'} C_{i_1,i_2,k} V_{k,k'} C_{i_3,i_4,k'}$.
In these formulas, the only point that can cause divergence is the inversion of the variance-covariance matrices.
For $ \Var'$, it depends on $\mu$ only and, by (A5), it is uniformly, in $\mu$, N.D. By \eqref{e:var1} and \eqref{e:var2}, $\Var$ converges to $\Var'$ again uniformly by a compactness argument.
As a consequence, the inversion of the variance-covariance matrices does not present problems.

We use the following lemma.
Its proof is given at the end of this section.
\begin{lemma} \label{l:lip}
Let $M$ be a $d\times d $ Gaussian matrix with variance-covariance $4$-tensor $T$ and expectation matrix $E$, then $\E\big(\det^2(M)\big)$ is a locally Lipschitz function of $T$ and $E$.
\end{lemma}
The remarks above imply that the conditional expectations and variances $\E_{\mathcal C} ({ \rm det}^2(X''(0)))$, $\E_{\mathcal C'} ({ \rm det}^2(X''(0)))$, $\Var_\mathcal C (X''(0))$ and $\Var_{\mathcal C'}(X''(0))$ are bounded uniformly in $\mu$.
This implies that the locally Lipschitz function $\E\big(\det^2(M)\big)$ is in fact globally Lipschitz, giving
\begin{multline*}
   \E_{\mathcal C} ({ \rm det}^2(X''(0))) = \E_{\mathcal C} ({ \rm det}^2(X''(0)))  - \E_{\mathcal C'} ({ \rm det}^2(X''(0)))\\
    \leq \ct \|
\Var_\mathcal C (X''(0)) - \Var_{\mathcal C' }(X''(0))\| +\ct  \| \E_\mathcal C (X''(0)) - \E_{\mathcal C'} (X''(0))\|.
\end{multline*}
Hence, we have
\begin{equation} \label{e:ec}
    \E_{\mathcal C} ({ \rm det}^2(X''(0))) \leq \ct \| \Var-\Var' \| + \ct \|
\Cov -\Cov' \|.
\end{equation}
Let us consider the first term in the r.h.s. of \eqref{e:ec}.
It is bounded by
\[
  \big[ 2 \rho^{-2} \big(r''(\rho\mu) - r''(0)\big ) \big]  - r^{(4)}(0)_{:,:,\mu,\mu} .
\]
By an application of the Taylor formula with integral remainder, this last term equals
\[
  \int_0^1  W(\xi) \big[ r^{(4)}(\xi \mu \rho)_{:,:,\mu,\mu}-r^{(4)}(0)_{:,:,\mu,\mu} \big] \dd\xi.
\]
where $ W(\xi)$ is some weight function. Let us consider the second term.
It is bounded by
\[
\ct \Big( \frac{1}{\rho}\big(r^{(3)} (\rho\mu) \big) - r^{(4)}(0)_{:,:,:,\mu} \Big).
\]
By direct integration,
\begin{multline*}
  \frac{1}{\rho}\big(r^{(3)} (\mu \rho)   \big)  -  r^{(4)}(0)_{:,:,:,\mu} =
   \frac{1}{\rho}\big(r^{(3)} (\mu \rho)  - r^{(3)} (0) \big)  -  r^{(4)}(0)_{:,:,:,\mu}\\
   = \int_0^1 r^{(4)}(\xi \mu \rho)_{:,:,:,\mu} -r^{(4)}(0)_{:,:,:,\mu} \  \dd\xi.
\end{multline*}
Giving that the r.h.s. of \eqref{e:ec} is bounded by
\begin{multline*}
  \ct \int_0^1  W' (\xi) \|r^{(4)}(\xi \mu \rho)_{:,:,:,\mu} -r^{(4)}(0)_{:,:,:,\mu}\|
\dd\xi\\
  \leq \ct \int_0^1  \|r^{(4)}(\xi \mu \rho)_{:,:,:,\mu} -r^{(4)}(0)_{:,:,:,\mu}\| \dd\xi,
\end{multline*}
since $W'(\xi)$ is bounded.
Gathering together the different bounds obtained so far, we get that the r.h.s.
in \eqref{e:krf:3} is bounded by
\begin{multline*}
 \ct \int_{B_{2r}}\int^1_0 \frac{\|r^{(4)}(\xi t)-r^{(4)}(0)\|}{\|t\|^d} \dd\xi \dd t + O (\|t\|^{d-1})\\
= \ct \int^1_0 \int_{B_{2r}} \frac{\|r^{(4)}(\xi t)-r^{(4)}(0)\|}{\|t\|^d} \dd t \dd\xi +  O (\|t\|^{d-1}).
\end{multline*}
This concludes the proof of the first assertion.

The same calculation proves the uniform domination of the integrand in the Kac-Rice formula of order 2, this implies the continuity.
\subsubsection*{Proof of Lemma \ref{l:lip}}

Note that $\E\big(\det^2(M)\big) $ is the expectation of a polynomial of degree $2d$ in the entries of $M$.
To compute it, we can decompose each entry of $M$ as the sum of its expectation and a centered random variable.
It is well known that the expectation of the product of $m$ Gaussian centered variables (possibly equal) is zero if $m$ is odd and is given by the Wick formula if $m$ is even as the sum over pairwise groupings of products of the $m/2$ covariances associated at the grouping.
In conclusion, $\E\big(\det^2(M)\big)$ is a degree $2d$ polynomial in the $d(d+1)/2$ entries of the expectation and in the $d(d+1)(d(d+1) +2)/8$ entries of the variance-covariance matrix.
But these details are not really needed: all that we need to know is that this function is a polynomial and hence, a local Lipschitz function.
\section{The distribution of \texorpdfstring{${\bf X}''(t)$}{} under isotropy} \label{a:pqcero}

We omit $t$ for brevity. Assume that $X$ is isotropic.
Lemma 2.3 in \cite{AD2022} computes the variance-covariance matrix of $\rX$.
In particular, Lemma 3.1 expresses the distribution of the Hessian matrix $X''$ in terms of the Gaussian Orthogonal Ensemble (GOE) in the following manner: let $G_d$ be a size $d$ GOE random matrix and $Z$ an independent centered normal random variable with variance $1/2$.
Then $X''(t)$ is equal in distribution to
\begin{equation*} 
 \sqrt{\frac{2\lambda_4}{3}}(G_d - Z \I_d).
\end{equation*}
The following facts follow directly from Lemma 2.3 in \cite{AD2022}.
Under the condition ${\rm tr}(X'') = -d z$, $X''$ is distributed as
\begin{equation*} 
  \sqrt{\frac{2\lambda_4}{3}}\Big(G_d - \Big[\frac{1}{d}{\rm tr}(G_d)-z\Big]\I_d \Big).
\end{equation*}
where $G_d$ is a GOE random matrix.
\smallskip

In the rest of this section we describe a convenient form for the square root $\Lambda$ of the variance-covariance matrix $\Xi$ of $\rX(t)$ defined in \eqref{e:bx}.
Recall that $\Lambda$ is a block-matrix with blocks $\Lambda_1$ corresponding to the gradient $X'(t)$ and $\Lambda_2$ corresponding to the Hessian and the value $(X''(t),X(t))$.
\smallskip

The first consequence of isotropy is that
\begin{equation*} 
  \Lambda_1 = \lambda_2\I_{d},
\end{equation*}
being $\lambda_2$ the (now common) variance of the derivatives $X'_i(t):1\le i\le d$ or, equivalently, the second spectral moment of $X(t)$: $\lambda_2=\int_{\R^d}\nu_i^2f(\nu)\dd\nu$, $f$ being the spectral density of $\X$.
The second consequence of isotropy is that we can split $\Lambda_2$ into two blocks,
\begin{equation*} 
  \Lambda_2 = \left[\begin{array}{r|r}\Lambda_{2,O}&0\\ \hline 0&\Lambda_{2,\Delta}\end{array}\right],
\end{equation*}
since the off-diagonal second derivatives $X''_{O}(t):=(X''_{ij}(t):1\le i<j\leq d)$ are independent from the diagonal derivatives $X''_{\Delta}(t):=(X''_{ii}(t):1\le i\le d)$ and from $X(t)$.
Besides, the variance-covariance matrix of $X''_{O}(t)$ equals
\begin{equation*}
  \Lambda_{2,O} = \lambda_4\I_{d(d-1)/2},
\end{equation*}
with $\lambda_4$ the (now common) variance of the derivatives $X''_{ij}(t):1\le i,j\le d$ or, equivalently, the fourth spectral moment of $X(t)$: $\lambda_4=\int_{\R^d}\nu_i^4f(\nu)\dd\nu$.
Furthermore, the variance-covariance matrix of $(X''_{\Delta}(t),X(t))$ equals
\begin{equation*}
\Lambda_{2,\Delta}=\left[\begin{array}{c|c}V&0\\ \hline
  \ell 1^\top_d & \gamma\end{array}\right],
\end{equation*}
where $V=a_0\I_d-a_1\J_d$ with $a_0=\sqrt{\frac{2\lambda_4}{3}}$, $a_1=\sqrt{\frac{\lambda_4}{3}}\big(\frac{\sqrt{2}+\sqrt{d+2}}{d}\big)$, $\J_d$ the $d\times d$ matrix with all entries equal to $1$, $\ell=-\frac{\lambda_2}{a_0+da_1}$ and $1_d$ the all ones $d\times 1$ matrix.
In conclusion, the variance-covariance matrix of $\rX$ equals
\begin{equation*}
\Lambda = \left[\begin{array}{c|c|c|c}
  \lambda_2\I_{d} & 0 & 0 & 0 \\ \hline
  0 & \lambda_4\I_{d(d-1)/2} & 0 &0 \\ \hline
  0 & 0 & a_0\I_d - a_1 \J_d & 0\\ \hline
  0 & 0 & \ell 1^\top_d & \gamma\end{array}\right],
\end{equation*}

Finally, let us point out that the trace of the matrix $Y''(t)$ coincides with that of $X''(t)$ up to a constant:
\begin{equation} \label{e:trazas}
 {\rm tr}(Y''(t)) = (a_0+da_1)^{-1} {\rm tr}(X''(t)),
\end{equation}
and that the value $X(t)$ can be written as
\begin{equation} \label{e:XDY}
X_D(t) = \frac{\ell}{a_0+da_1}{\rm tr}(X''(t)) + \gamma Y_D(t).
\end{equation}
Indeed, from the above computations we have ${\rm tr}(Y'') = 1^\top_d Y''_{\Delta} = 1^\top_d V^{-1} X''_{\Delta} = (a_0+da_1)^{-1} 1^\top_d X''_{\Delta} = (a_0+da_1)^{-1} {\rm tr}(X'')$.
Here, we inverted $\Lambda$ block-wise:
\begin{equation*}
 \Lambda_{2,\Delta}^{-1}=\left[\begin{array}{c|c}V^{-1}&0\\\hline \widetilde{\ell} 1^\top_d & \gamma^{-1}\end{array}\right],
\end{equation*}
with $V^{-1}= \frac{1}{a_0}\I_d-\frac{a_1}{a_0(a_0+da_1)}\J_d$ and $\widetilde{\ell}=-\frac{\ell}{\gamma(a_0+da_1)}$.
\eqref{e:XDY} follows immediately from \eqref{e:trazas} and the expression for $\Lambda_{2,\Delta}$ above.
\section{Proofs of some lemmas} \label{a:somel}
\begin{proof}[Proof of Lemma \ref{l:pqcero}]
In the first place, note that condition (A4) implies that ${\mathbf f}_q$, as defined in \eqref{e:Fq}, charges a strictly positive measure on a small enough neighborhood of $0$ and, thus,  $P_q(0)=0$.
Denoting $b_{\balpha}({\bf m}) = \sum_{k=0}^d \alpha_kb_{k}({\bf m})$, we have
\begin{equation*}
   P_q(0) = \sum_{k=0}^d\alpha_k \Big(\sum_{{\bf m}\in[D]^q} b_{k}({\bf m}) \prod^q_{i=1} \psi_{m_{i}} (0)\Big).
\end{equation*}
From \eqref{e:nucleos-b} we have $\psi_m(\nu)=(\Lambda^{-1}\omega(\nu))_m$, since $(\omega(0))_m=0$ for $m<D$, we deduce that $\psi_m(0)=0$ for $m<D$.
Similarly, from \eqref{e:XDY} we see that $\psi_D(0)=\gamma^{-1}>0$. It follows that $\prod^q_{i=1} \psi_{m_{i}} (0)$ does not vanish only if $m_i=D$ for all $i$.
Hence,
\begin{equation*}
   P_q(0) = \psi_{D} (0)^q\sum_{k=0}^d \alpha_kb_{k}(D,\cdots,D)=0.
\end{equation*}
This equality gives the result.
\end{proof}

\begin{proof}[Proof of Lemma \ref{l:traza}]
For simplicity of notation, we omit $t$.
Using the joint distribution of $X''$ and $X$ described in \ref{a:pqcero} and conditioning on $X''$, we get
\begin{align} \label{e:cq}
 &0= \E\left[\sum_{k=0}^d(-1)^k\alpha_k \det(X'')\indicator_{A_k}(X'') \E\Big[\indicator_{[u,\infty)}(X)H_q(Y_D)\mid X''\Big]\right] \notag\\
 &=\E\left[\sum_{k=0}^d(-1)^k\alpha_k \det(X'')\indicator_{A_k}(X'') \E\Big[\indicator_{[u,\infty)}\big(\ell {\rm tr}(X'')+\gamma Y_D\big)H_q(Y_D)\mid X''\Big]\right] \notag\\
 &= \E\left[\sum_{k=0}^d(-1)^k\alpha_k \det(X'')\indicator_{A_k}(X'') \phi^{(q-1)}\Big(\frac{u-\ell {\rm tr}(X'')}{\gamma}\Big)\right] \notag\\
 &= \E\left[\sum_{k=0}^d(-1)^k\alpha_k \det(X'')\indicator_{A_k}(X'') (H_{q-1}\cdot\phi)\Big(\frac{u-\ell {\rm tr}(X'')}{\gamma}\Big)\right]
\end{align}
Here, we used \eqref{e:XDY} in the second line and Rodrigues' formula in the remaining ones.
Now, we show that we can restrict $v:=\gamma^{-1}(u-\ell{\rm tr}(X''))$ in \eqref{e:cq} to an arbitrary compact set.
Indeed, take $M>0$ and consider
\begin{multline*}
 \left|\E\left[\sum_{k=0}^d(-1)^k\alpha_k \det(X'')\indicator_{A_k}(X'') (H_{q-1}\cdot\phi)(v)\indicator_{|v|>M}\right]\right|\\
 \leq a\E\left[| \det(X'')| |H_{q-1}(v)|\phi(v)\indicator_{|v|>M}\right]\\
 = aC_{imk}\E\left[| \det(X'')|
\indicator_{|v|>M}\right],
\end{multline*}
where $a=\max\{|\alpha_k|:k=0,\cdots,d\}$ and $C_{imk}$ is the constant implied in Imkeller's inequality, see \cite[Prop. 3]{imkeller}. The r.h.s.
in the last display can be done arbitrariliy small as long as $M$ grows.
Hence, for all $q$
\begin{equation*}
 \E\left[\sum_{k=0}^d(-1)^k\alpha_k \det(X'')\indicator_{A_k}(X'') H_q(v)\phi(v) \indicator_{|v|\leq M}\right]=0.
\end{equation*}

Now, since this holds true for all Hermite polynomials in $v$, it holds true for any continuous function $g(v)$, that is, we can replace $H_{q-1}(v)\phi(v)\indicator_{|v|\leq M}$ in the last expression by $g(v)\indicator_{|v|\leq M}$.
Finally, taking $M\to\infty$ and redifining $g$, we get
\begin{equation*}
 \E\left[\sum_{k=0}^d(-1)^k\alpha_k \det(X'')\indicator_{A_k}(X'') g({\rm tr}(X''))\right]=0,
\end{equation*}
for $g\in C_0$.
By the very definition of conditional expectation, we have
\begin{equation*}
 \E\left[\sum_{k=0}^d(-1)^k\alpha_k \det(X'')\indicator_{A_k}(X'')\mid {\rm tr}(X'')\right]=0.
\end{equation*}
Thus, the result follows.
\end{proof}

\begin{proof}[Proof of Lemma \ref{l:LMethod}]
When $k=d$,
\[
G_d(z) =(-1)^d \E \left( \prod_{i=1}^d  (\nu_i -z)\indicator_{\nu_1<\nu_2<\cdots<\nu_d<z}\right).
\]
A direct application of the dominated convergence theorem yields that $ G_d(z)$ $\simeq z^d$, as $z \to \infty$.
Consider now the case $k<d$. We can write $G_k(z)$ as an integral
\begin{equation*}
 G_k(z) = \int_{\nu_1<\cdots<\nu_k<z<\nu_{k+1}<\cdots<\nu_d} (-1)^kI(\boldsymbol\nu)\dd\boldsymbol\nu,
\end{equation*}
with $I(\boldsymbol\nu)= K_d \prod^d_{i=1}(\nu_i -z)\prod_{i<j}(\nu_j-\nu_i)\prod^d_{i=1}\phi(\nu_i)$, where $K_d$ is the normalising constant associated to the density of the $\nu_i$'s.
We use the Laplace Method. The maximum of the exponential part of the integrand $I$ on the integration domain defined above is attained at the point $P=(0,\cdots,0;z,\cdots,z)$ $= z\sum^d_{i=k+1}e^d_i$.
The Laplace Method is based on a change of variable locally in a neighborhood of $P$.
We are allowed to replace any term in the integral by its equivalent at $P$.
We use the following change of variables which is in spherical coordinates for the last variables.
\[\begin{cases}
\nu_j \mapsto  u_j, & j=1,\cdots, k,\\
 \nu_j \mapsto z + \rho u_j, &j=k+1,\cdots, d, \quad  0<u_{k+1}< \cdots < u_d,
\end{cases}\]
where the vector $(u_{k+1},\cdots, u_d)$ belongs to a subset ${\cal R}$ of $\mathbb{S}^{d-k-1}$.
It is clear that, as $z \to +\infty $:
\[
 \prod_{i\leq k<j}(\nu_j-\nu_i) \simeq \ct z^{k(d-k)},
\]
and
\[
 \int_{\nu_1<\cdots<\nu_k<z} (-1)^k\prod^k_{i=1}(\nu_i-z) \prod_{i<j\leq k}(\nu_j-\nu_i)\prod^k_{i=1}\phi(\nu_i) \dd \nu_1  \ldots \dd \nu_k \simeq \ct z^k.
\]
So, it is sufficient to study the limit of the integral in the last variables
\[
\mathcal I:=\int_{z<\nu_{k+1}<\cdots <\nu_d } \prod_{i=k+1}^d(\nu_i-z)\prod_{k<i<j}(\nu_j-\nu_i)\prod_{i=k+1}^d\phi(\nu_i)\dd\nu_{k+1}\cdots \dd\nu_d.
\]
We set $\eta:=(d-k)(d-k+3)/2-1$, then
\begin{multline*}
  \mathcal I= \ct \int_{{\cal R}}\int^\infty_0 \rho^{\eta} \prod_{i=k+1}^d u_i\prod_{k<i<j}(u_j-u_i) \exp\left\{-\frac12\sum^d_{i=k+1}(z+\rho u_i)^2\right\}  \\ \dd\rho \dd\sigma(u_{k+1},\cdots,u_d)\\
  \simeq \ct e^{-\frac{d-k}{2}z^2} \int_{{\cal R}} \prod_{i=k+1}^d u_i\prod_{k<i<j}(u_j-u_i) \int^\infty_0 \rho^{\eta} \exp\left\{-\rho z \sum^d_{i=k+1}u_i\right\} \\ \dd\rho \dd\sigma(u_{k+1},\cdots,u_d)\\
  = \ct \frac{e^{-\frac{d-k}{2}z^2}}{z^{\eta+1}} \int_{{\cal R}} \frac{\prod_{i=k+1}^d u_i\prod_{k<i<j}(u_j-u_i)}{(\sum^d_{i=k+1}u_i)^{\eta+1}} \dd\sigma(u_{k+1},\cdots,u_d)
  \int^\infty_0 \rho^{\eta} e^{-\rho} \dd\rho \\
  =\ct \frac{e^{-\frac{d-k}{2}z^2}}{z^{\eta+1}},
\end{multline*}
where $\sigma$ denotes the surfacic measure on the sphere $\mathbb{S}^{d-k-1}$.
The result follows.
\end{proof}

\section{Table of Notation} \label{app:notation}
 \begin{table}[h!]
    \centering
    \renewcommand{\arraystretch}{1.0}
    \begin{tabular}{ll}
        \hline
        \textbf{Notation} & \textbf{Description} \\
        \hline
        $\balpha$ & The vector $(\alpha_0, \cdots, \alpha_d) \in \R^{d+1}$ \\
        $\cA_k$ & The set of symmetric $d\times d$ matrices with index $k$ \\
        $\hB$ & Complex Hermitian Brownian measure on $\R^d$ \\
        $\cC^{\balpha}(u,\cT)$ & Normalized linear combination of critical points \\
        $\crit^k (u,\cT)$ & Number of critical points in $\cT$ with index $k$ above level $u$ \\
        $\ct$ & Unimportant positive constant, its value may change from line to line \\
        $d$, $D$ & Dimensions related by $D=\frac{(d+1)(d+2)}{2}$ \\
        $e^{\mathcal D}_k$ & $k$-th canonical vectors in $\R^{\mathcal D}$\\
        $\cal F$ & Fourier transform \\
        $\Gamma(\cdot)$ & Covariance function of the last coordinate of $\rY(\cdot)$: $Y(\cdot)$ \\
        $\mathcal{H}$ & Set of complex-valued Hermitian square integrable functions \\
        $H_m$ & Hermite polynomial of degree $m$ \\
        $H_{\otimes {\boldsymbol n}}$ & Tensorial Hermite polynomial of degrees given by the coordinates of ${\boldsymbol n}$ \\
        $i(M)$ & Index of a symmetric matrix $M$ (number of its negative eigenvalues) \\
        $I^{\hB}_q$ & $q$-fold multiple Wiener-It\^o integral \\
	$\J_d$, $1_d$ & The all ones $d \times d$ matrix, $d \times 1$ vector\\
        $[q]$, ${\mathcal J}_q$ & The sets $\{1,\cdots,q\}$ and $\{{\mathbf n}\in\N^D:|{\mathbf n}|=n_1+\cdots+n_D=q\}$ respectively \\
        $\mathcal{K}_q$ & $q$-th Wiener chaos \\
        $\Lambda$ & Block matrix such that $\Lambda\Lambda^\top = \Var(\rX(t))$ \\
        N. D. & Non-degenerate \\
        $\phi_k$ & Standard normal density in $\R^k$ \\
        $\Psi(t)$ & Maximum bound on derivatives of $r(t)$ \\
        $r(\cdot)$ & Covariance function of $X(\cdot)$ \\
        $\mathbb{S}^{k-1}$ & Unit sphere in $\R^k$ \\
        $Sym(\psi)$ & Symmetrization of a kernel $\psi$ \\
        $\cT$ & The set $[-T,T]^d$ \\
        $\Var$ & Variance-covariance matrix  \\
        $W(\ve)$ & Volume of a ball of radius $\ve$ in $\R^d$ \\
        $X(\cdot)$ & Real-valued centered stationary Gaussian random field \\
        $\rX(t), \rY(t)$ & The vector $(X'(t), X''(t), X(t))$ and its standardized version \\
      $\|\cdot\|$ & Norms for matrices and order 3 and 4 tensors \\
        $\simeq$ & Equivalence of functions \\
        $\indicator_A$ & Characteristic, a.k.a. indicator, function of the set $A$\\
        $\bar{\otimes}_r$ & $r$-th contraction operator \\
        \hline
    \end{tabular}
\end{table}
\thispagestyle{empty}
\end{appendix}

\end{document}